\documentclass[reqno]{amsart}

\usepackage{amsfonts}
\usepackage{amssymb}

\usepackage{amscd}
\usepackage{pictexwd,dcpic}
\usepackage{graphicx}
\usepackage{tikz}
\usepackage{fullpage}
\usepackage{hyperref}
\hypersetup{
    colorlinks=true,
    linkcolor=blue,
    filecolor=magenta,
    urlcolor=cyan,
    citecolor=cyan,}
\usepackage[nameinlink,capitalize]{cleveref}
\usepackage{enumitem}

\title{Blowups of hypersurfaces}

\author{Matthew Weaver}
\address{School of Mathematical and Statistical Sciences, Arizona State University, Wexler Hall, Tempe AZ 85281}
\email{matthew.j.weaver@asu.edu}

\date{}


\newtheorem{thm}{Theorem}[section]
\newtheorem*{thm-nonum}{Theorem}

\newtheorem{prop}[thm]{Proposition}

\newtheorem{cor}[thm]{Corollary}
\numberwithin{equation}{section}

\theoremstyle{definition}
\newtheorem{rem}[thm]{Remark}
\newtheorem{set}[thm]{Setting}
\newtheorem{notat}[thm]{Notation}
\newtheorem{defn}[thm]{Definition}

\newtheorem{ex}[thm]{Example}


\Crefname{thm}{Theorem}{Theorems}
\Crefname{ex}{Example}{Examples}



\def\A{\mathcal{A}}

\def\D{\mathcal{D}}
\def\G{\mathcal{G}}

\def\J{\mathcal{J}}

\def\K{\mathcal{K}}
\def\L{\mathcal{L}}
\def\m{\mathfrak{m}}
\def\n{\mathfrak{n}}

\def\p{\mathfrak{p}}

\def\R{\mathcal{R}}
\def\S{\mathcal{S}}
\def\F{\mathcal{F}}

\def\x{\underline{x}}


\def\dim{\mathop{\rm dim}}
\def\depth{\mathop{\rm depth}}

\def\hgt{\mathop{\rm ht}}
\def\proj{\mathop{\rm Proj}}

\def\reg{\mathop{\rm reg}}
\def\spec{\mathop{\rm Spec}}

\def\chr{\mathop{\rm char}}
\def\bideg{\mathop{\rm bideg}}

\def\rt{\mathop{\rm rt}}

\usepackage[all]{xy}

\newcommand*{\dit}{{\scalebox{0.5}{$\bullet$}}}

\usepackage{nicematrix}


\begin{document}

\begin{abstract}
A classical result of Micali \cite{Micali64} asserts that a Noetherian local ring is regular if and only if the Rees algebra of its maximal ideal is defined by an ideal of linear forms. In this case, this defining ideal may be realized as a determinantal ideal of generic height, and so the Rees ring is easily resolved by the Eagon--Northcott complex, providing a wealth of information. If $R$ is a non-regular local ring, it is interesting to ask how far the Rees ring of its maximal ideal strays from this form, and whether any homological data can be recovered. In this paper, we answer this question for hypersurface rings, and provide a minimal generating set for the defining ideal of the Rees ring. Furthermore, we determine the Cohen--Macaulayness of this algebra, along with several other invariants.
\end{abstract}

\maketitle


\section{Introduction}

For an ideal $I=(f_1,\ldots,f_n)$ of a Noetherian ring $R$, recall that the \textit{Rees algebra} of $I$ is the graded subring $\R(I) = R[f_1t,\ldots,f_nt]$ of $R[t]$, for $t$ an indeterminate. As a graded subalgebra, we note that $\R(I) = R\oplus It\oplus I^2t^2 \oplus \cdots$, hence this ring encodes the data on all powers of $I$. As such, the Rees algebra has proven to be an essential tool for algebraists within the study of multiplicities and reductions. In addition to this algebraic perspective, we note that there is also significant motivation arising within algebraic geometry. Indeed, $\R(I)$ is commonly called the \textit{blowup algebra}, as $\proj(\R(I))$ is precisely the blowup of $\spec(R)$ along the subscheme $V(I)$. Hence the Rees ring is of great importance to geometers within the study of resolutions of singularities and questions related to birationality.

Whereas the above definition of $\R(I)$ is simple to state, this description offers little insight into the structure of this algebra, or its relevant properties. To remedy this issue, one hopes to \textit{implicitize} this algebra, which correlates to writing $\R(I)$ as a quotient of a polynomial ring over $R$ in new indeterminates. More specifically, in the setting above, there is a natural epimorphism $\Psi: R[T_1,\ldots,T_n] \rightarrow \R(I)$ given by $T_i\mapsto f_i t$. This of course induces an isomorphism $\R(I) \cong R[T_1,\ldots,T_n]/\J$ where $\J= \ker \Psi$ is the \textit{defining ideal} of $\R(I)$. With this description, it suffices to produce a generating set of $\J$, the \textit{defining equations} of $\R(I)$, to determine the algebraic structure of the Rees ring, which often reveals many of its properties.

Determining the generators of $\J$ has been called the \textit{implicitization problem} for Rees rings, and has been well studied, however a full set of defining equations of $\R(I)$ is known in very few settings. There has been some success for ideals with low codimension, namely codimension two \cite{BM16,CPW23,MU96,Nguyen14,Nguyen17,Weaver23} and codimension three \cite{KPU17,Morey96, Weaver24}, under various assumptions. The difficulty lies in that $\J$ consists of the polynomial relations amongst the generators $f_1,\ldots,f_n$ of $I$. As such, $\J$ encodes the \textit{syzygies} on every power of $I$. Thus one is often limited to the study of ideals with known resolutions and accessible syzygies. In particular, if the syzygies of $I$ are as simple as possible, namely Koszul, the Rees algebra has a particularly simple description.

If $I=(x_1,\ldots,x_n)$ for $x_1,\ldots,x_n$ an $R$-regular sequence, it is well known that $\R(I) \cong R[T_1,\ldots,T_n]/I_2(\psi)$, for 
\begin{equation}\label{intro psi defn}
\psi=\begin{bmatrix}
  x_1&\cdots & x_n\\
  T_1&\cdots & T_n
\end{bmatrix} 
\end{equation}
and where $I_2(\psi)$ denotes its ideal of $2\times 2$ minors \cite[Chap. I: Thm. 1, Lem. 2]{Micali64}. In particular, the Rees ring of the maximal ideal of any regular local ring is of this form. Interestingly, the converse to this statement holds as well.

\begin{thm-nonum}[{Micali \cite[Chap. I, Thm. 2]{Micali64}}]
Let $R$ be a Noetherian local ring with maximal ideal $\m = (x_1,\ldots,x_n)$. Then $R$ is a regular local ring if and only if $\m$ is of linear type, i.e. $\R(\m) \cong \S(\m)$, where $\S(\m)$ denotes the symmetric algebra of $\m$. In this case, we have $ \R(\m) \cong R[T_1,\ldots,T_n]/I_2(\psi)$.
\end{thm-nonum}

With this, it is curious as to how far the defining ideal of the Rees ring differs from the shape above when $R$ is not a regular local ring. In the present article, we consider a hypersurface ring $R=k[x_1,\ldots,x_n]/(f)$, for $k$ a field and $f$ a homogeneous polynomial, and consider the Rees ring $\R(\m)$, for $\m$ the homogeneous maximal $R$-ideal. Whereas this setting may seem rather mild, we note that much of the previous literature on equations of Rees rings pertains specifically to ideals in a polynomial ring $k[x_1,\ldots,x_n]$, with few exceptions \cite{Weaver23,Weaver24}. As the Rees ring is the algebraic realization of the blowup, there is significant geometric motivation here, as different rings must be taken to account for different blowups. In particular, here the Rees ring $\R(\m)$ correlates to the blowup of the hypersurface $\spec(R) \cong V(f)$ at the point $\{\m\}$.

From the theorem above, the defining ideal $\J$ of $\R(\m)$ will fail to have the previous shape, and the containment $I_2(\psi) \subseteq \J$ is strict. However, we show that the defining ideal is not too distant from this form, and differs by a \textit{downgraded sequence} of polynomials obtained from a recursive algorithm, akin to the methods used in \cite{HS14,RS22}.
The main results of this article, \Cref{main result} and \Cref{Cohen--Macaulayness of R(m)}, are reformulated as follows.

\begin{thm}
Let $S=k[x_1,\ldots,x_n]$ with $n\geq 2$ for $k$ a field. Let
$f\in S$ be a homogeneous polynomial of degree $d\geq 1$, and let $R=S/(f)$ with homogeneous maximal ideal $\m=\overline{(x_1,\ldots,x_n)}$, where $\overline{\,\cdot\,}$ denotes images modulo $(f)$. The Rees algebra of $\m$ is $\R(\m) \cong R[T_1,\ldots,T_n]/\J$ where
$$\J = \overline{I_2(\psi) + (f_1,\ldots,f_d)}$$
where $\psi$ is as in (\ref{intro psi defn}), and $f_1,\ldots,f_d$ is a downgraded sequence of polynomials associated to $f$. Additionally, $\R(\m)$ is almost Cohen--Macaulay and is Cohen--Macaulay if and only if $d\leq n-1$.
\end{thm}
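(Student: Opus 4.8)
The plan is to pass from $R=S/(f)$ to the polynomial ring $P:=S[T_1,\dots,T_n]$, where Micali's theorem is available, and to reduce the description of $\J$ to a single colon computation. Since $R[T_1,\dots,T_n]=P/(f)$ and the structure map $R[T_1,\dots,T_n]\to R[t]$, $T_i\mapsto\overline{x_i}t$, lifts to $P\xrightarrow{\,T_i\mapsto x_it\,}S[t]\twoheadrightarrow S[t]/(f)=R[t]$, it suffices to determine $\widetilde{\J}:=\ker\bigl(P\to R[t]\bigr)$ and then set $\J=\widetilde{\J}/(f)$. By Micali's theorem the first arrow has image $\R_S(\n)$, for $\n=(x_1,\dots,x_n)S$, and kernel exactly $I_2(\psi)$, so
$$\widetilde{\J}/I_2(\psi)\;\cong\;fS[t]\cap\R_S(\n)\;=\;\bigoplus_{i\ge0}\bigl((f)\cap\n^i\bigr)t^i$$
as an ideal of $\R_S(\n)$. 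The one genuinely computational input is the colon formula $\n^i:f=\n^{\max\{i-d,0\}}$, which in a polynomial ring is immediate by comparing degrees of homogeneous elements; hence $(f)\cap\n^i=f\,\n^{\max\{i-d,0\}}$, and a direct inspection of graded pieces shows this ideal is minimally generated over $\R_S(\n)$ by the $d+1$ elements $ft^0,ft^1,\dots,ft^d$ and by no fewer. Choosing preimages in $P$ — this is precisely where the recursive downgrading algorithm enters, producing bihomogeneous $f_0=f,f_1,\dots,f_d$ with $\bideg f_i=(d-i,i)$ and $f_i\mapsto ft^i$ — gives $\widetilde{\J}=I_2(\psi)+(f_0,\dots,f_d)$, and reducing modulo $f_0=f$ yields $\J=\overline{I_2(\psi)+(f_1,\dots,f_d)}$; minimality of this generating set is then checked one bidegree at a time.

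For the homological statements I would compute $\mathrm{pd}_P\R(\m)$ by constructing an explicit $P$-free resolution of $\R(\m)=P/\widetilde{\J}$ and reading off its length. Begin with the Eagon--Northcott resolution of $P/I_2(\psi)=\R_S(\n)$, of length $n-1$; since $\R_S(\n)$ is a domain and $f\ne0$, adjoining $f_0=f$ through a mapping cone gives a minimal length-$n$ resolution of $A:=\R_S(\n)/(f)$, a Cohen--Macaulay ring of dimension $n$. One then accounts for $f_1,\dots,f_d$ recursively, in the style of the iterated downgrading / Jacobian-dual constructions of \cite{HS14,RS22}, exploiting the special syzygies $T_jf_i-x_jf_{i+1}\in I_2(\psi)$ — which hold because both sides map to $x_jft^{i+1}$ in $S[t]$ and $I_2(\psi)=\ker(P\to S[t])$ — to keep each successive cone, after minimalization, from extending the resolution past homological degree $n+1$. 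The target is $\mathrm{pd}_P\R(\m)=n$ when $d\le n-1$ and $\mathrm{pd}_P\R(\m)=n+1$ when $d\ge n$. Granting this, Auslander--Buchsbaum over the $2n$-dimensional polynomial ring $P$ gives $\depth\R(\m)=2n-\mathrm{pd}_P\R(\m)\in\{n,n-1\}$, while $\dim\R(\m)=\dim R+1=n$; thus $\R(\m)$ is always almost Cohen--Macaulay, and it is Cohen--Macaulay exactly when $\mathrm{pd}_P\R(\m)=n$, i.e.\ when $d\le n-1$.

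As an independent check on the Cohen--Macaulay dichotomy, and the quickest route to it, observe that because $f$ is homogeneous its initial form in the $\m$-adic filtration is $f$ itself, whence $\mathrm{gr}_\m(R)\cong S/(f)=R$ as graded rings — one also sees this directly, since $\m^i/\m^{i+1}\cong S_i/fS_{i-d}=R_i$ by the colon formula above. In particular $\mathrm{gr}_\m(R)$ is a hypersurface ring, hence Cohen--Macaulay, with $a$-invariant $a(\mathrm{gr}_\m(R))=a(S/(f))=d-n$. The classical criterion of Trung--Ikeda then says that, $\mathrm{gr}_\m(R)$ being Cohen--Macaulay, $\R(\m)$ is Cohen--Macaulay if and only if $a(\mathrm{gr}_\m(R))<0$, i.e.\ $d<n$, and is in any case at least almost Cohen--Macaulay; this matches the resolution-theoretic count and explains the role of the threshold $d=n$.

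The main obstacle is the iterated mapping-cone bookkeeping of the second paragraph: showing that accounting for the $d$ elements $f_1,\dots,f_d$ never pushes the resolution beyond the $(n+1)$-st spot, and pushes it there precisely when $d\ge n$. This is not a routine computation — it demands compatible choices of comparison maps at every stage (so that consecutive cones can be minimalized) and a genuine use of the internal structure of the Eagon--Northcott complex together with the downgrading relations, rather than treating $f_1,\dots,f_d$ as a generic sequence. By contrast, the colon formula, the identification of $\mathrm{gr}_\m(R)$, and the dimension count are all straightforward.
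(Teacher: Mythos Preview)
Your proposal is correct and takes a genuinely different, more elementary route than the paper on both fronts.

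For the defining ideal, the paper never computes the kernel directly. Instead it shows $\J=(I_2(\psi)+(f)):\n^\infty$, bounds local cohomology via the Eagon--Northcott resolution to cut this saturation down to $:\n^d$, and then identifies $\widetilde{\J}$ with the \emph{divisorial} ideal $\widetilde{f}\,\widetilde{\K}^{(d)}/\widetilde{x_n}^{\,d}$ in $\R(\n)$, where $\widetilde{\K}=(\widetilde{x_n},\widetilde{T_n})$; the downgraded sequence is shown to generate $\widetilde{f}\,\widetilde{\K}^{d}/\widetilde{x_n}^{\,d}$, and the proof concludes by proving $\widetilde{\K}^d=\widetilde{\K}^{(d)}$ via linkage and strong Cohen--Macaulayness. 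Your one-line identification $\widetilde{\J}/I_2(\psi)=\bigoplus_i f\,\n^{\max\{i-d,0\}}t^i$ and the observation that $f_i\mapsto ft^i$ under $T_j\mapsto x_jt$ bypass all of this machinery; the only place where homogeneity of $f$ is used is the colon formula $\n^i:f=\n^{\max\{i-d,0\}}$, which is exactly what makes the problem tractable.

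For Cohen--Macaulayness, the paper again works through $\widetilde{\K}$: it builds short exact sequences computing $\widetilde{\K}^d$, depth-chases to get $\depth\R(\m)\ge n-1$, uses the Aberbach--Huneke--Trung inequality $\rt(\m)\le\dim\F(\m)$ for the ``only if'', and invokes Cohen--Macaulayness of $\widetilde{\K}^i$ for $i\le n-1$ (via Eagon--Northcott) for the ``if''. Your observation that $\mathrm{gr}_\m(R)\cong R$ is a hypersurface ring with $a$-invariant $d-n$, followed by Goto--Shimoda/Trung--Ikeda, is again much shorter; your iterated-mapping-cone plan (approach 1) is unnecessary and, as you correctly flag, would be the hard part. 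One small caveat: the ``almost Cohen--Macaulay in any case'' assertion is not literally part of the Trung--Ikeda statement, but it does follow from the standard two short exact sequences linking $\R(\m)$, $\R(\m)_+$, and $\mathrm{gr}_\m(R)$ once the latter is Cohen--Macaulay, so the claim stands.

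What the paper's heavier approach buys is reusability: the divisorial description and the short exact sequences for $\widetilde{\K}^d$ are exactly what the paper later uses to bound $\reg\R(\m)$, and the method is a deliberate instance of a template (Kustin--Polini--Ulrich, Boswell--Mukundan) that applies well beyond maximal ideals of hypersurface rings. Your argument is sharper for this specific problem precisely because it exploits the features---$f$ homogeneous, $\n$ the irrelevant ideal---that make $\mathrm{gr}_\m(R)\cong R$ and $\n^i:f=\n^{i-d}$ transparent.
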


The proof of the statement above relies on the so-called \textit{method of divisors} of Rees rings, as introduced in \cite{KPU11}. We approximate, in a sense, the Rees ring $\R(\m)$ by a simpler algebra and compare these two rings, and their defining ideals. Then an adaptation of the method of \textit{downgraded sequences}, as used in \cite{HS14,RS22}, is employed and it is shown that this recursive algorithm yields a generating set of the defining ideal. With this description of the defining ideal, we describe the Cohen--Macaulayness of $\R(\m)$ and other homological invariants.

We now describe how this paper is organized. In \Cref{Main Section} we begin the study of the Rees algebra of the maximal ideal $\m$ in a hypersurface ring $R$. In particular, we introduce a description of the defining ideal as a particular colon ideal, which will be used throughout the later sections. In \Cref{Approx Section} we map onto the Rees ring $\R(\m)$ by an algebra defined by $I_2(\psi)$, and compare certain properties between these rings. In \Cref{Algorithm Section} we introduce a recursive algorithm beginning from the hypersurface equation $f$ defining $R$. This algorithm produces equations of the defining ideal, and we give a criterion for when this process yields a full generating set. In \Cref{Defining Ideal Section} it is shown that this criterion satisfied and that this algorithm produces a \textit{minimal} generating set of $\J$. Lastly, properties of $\R(\m)$ such as relation type, Cohen--Macaulayness, and regularity are studied.


\section{Ideals in hypersurface rings}\label{Main Section}

We now begin our study of the Rees algebra of the maximal ideal of a hypersurface ring. The objective of this section is to introduce the defining ideal and provide a tangible description, which will be used in the proceeding sections. Our setting for the duration of the paper is the following.

\begin{set}\label{main setting}
Let $S=k[x_1,\ldots,x_n]$ for $k$ a field and  $n\geq 2$, and let $\n = (x_1,\ldots,x_n)$. Let
$f\in S$ be a homogeneous polynomial of degree $d\geq 1$, and let $R=S/(f)$ with homogeneous maximal ideal $\m=\overline{\n}$, where $\overline{\,\cdot\,}$ denotes images modulo $(f)$.
\end{set}

We note that the assumption that $n\geq 2$ ensures that $\m$ is an ideal of positive grade, which will be crucial to our arguments. For instance, this ensures that $\dim \R(\m)=n$ \cite[1.2.4]{VasconcelosBook94}. Additionally, we note that for most purposes, one can assume that $d\geq 2$ to avoid reducing to the setting of a polynomial ring. However, we proceed more generally and make no such assumption here.

Before we begin our study of the Rees algebra, we start by producing a free presentation of the maximal ideal $\m$. We note that a free resolution of $\m$, infinite of course, is available from either Shamash \cite{Shamash69} or Tate \cite{Tate57}. However, as we only require a free presentation, we offer a short argument making use of the mapping cone construction \cite[\textsection A3.12]{Eisenbud}.

\begin{prop} \label{Presentation of m}
With respect to the generating set $\{\overline{x_1},\ldots,\overline{x_n}\}$, the ideal $\m$ may be presented by 
$$R^{\binom{n}{2} +1} \overset{\overline{\varphi}}{\longrightarrow} R^{n} \rightarrow \m\rightarrow 0$$
where $\varphi = [\delta_1(\x)\,|\,\partial f]$ is the matrix with $\delta_1(\x)$ the first Koszul differential on $\x= x_1,\ldots,x_{n}$, and $\partial f$ is any column with homogeneous entries, of the same degree, such that $[x_1\ldots x_{n}] \cdot \partial f =f$.
\end{prop}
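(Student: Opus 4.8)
The plan is to obtain the presentation of $\m$ by comparing it, through a short exact sequence, to the module $\n\otimes_S R$, whose presentation is immediate, and then to apply the mapping cone construction exactly as indicated in the statement.

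Since $f$ is homogeneous of positive degree, $f\in\n$, so each monomial of $f$ is divisible by some $x_i$; grouping the monomials accordingly produces homogeneous $a_1,\dots,a_n\in S$ of degree $d-1$ with $f=\sum_i a_i x_i$, equivalently $[x_1\ \cdots\ x_n]\cdot\partial f=f$, and I fix any column $\partial f$ of this kind. The truncated Koszul complex on the $S$-regular sequence $\x$ gives an exact sequence $S^{\binom n2}\overset{\delta_1(\x)}{\longrightarrow}S^n\to\n\to0$; applying $-\otimes_S R$ and using right exactness yields a presentation
\[
R^{\binom n2}\overset{\overline{\delta_1(\x)}}{\longrightarrow}R^n\overset{\epsilon}{\longrightarrow}\n\otimes_S R\to0,\qquad \epsilon(e_i)=x_i\otimes1 .
\]
Identifying $\n\otimes_S R$ with $\n/f\n$, there is a short exact sequence of $R$-modules
\[
0\longrightarrow(f)/f\n\longrightarrow\n/f\n\overset{\pi}{\longrightarrow}\m\longrightarrow0,
\]
where $\pi$ is the natural surjection, which coincides with multiplication $\n/f\n\to\n R=\m$, $\xi+f\n\mapsto\xi+(f)$; in particular $\pi\circ\epsilon\colon R^n\to\m$ sends $e_i$ to $\overline{x_i}$. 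Because $f$ is a nonzerodivisor on $S$ and $f\in\n$, multiplication by $f$ induces an isomorphism $S/\n\overset{\sim}{\longrightarrow}(f)/f\n$ of $R$-modules, under which $1\in k=S/\n$ corresponds to the class of $f$ in $\n/f\n=\n\otimes_S R$.

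Now the mapping cone construction of \cite[\textsection A3.12]{Eisenbud}, applied to this short exact sequence together with the presentation of $\n\otimes_S R$ above and a surjection $R\twoheadrightarrow k$, shows that $\m$ is presented by the block matrix $[\ \overline{\delta_1(\x)}\mid w\ ]$, where $w\in R^n$ is any lift along $\epsilon$ of the composite $R\twoheadrightarrow k\hookrightarrow\n\otimes_S R$, i.e.\ any column with $\epsilon(w)$ equal to the class of $f$. From $f=\sum_i a_i x_i$ one gets $f\otimes1=\sum_i\overline{a_i}\,(x_i\otimes1)=\epsilon(\overline{\partial f})$ in $\n\otimes_S R$, so $w=\overline{\partial f}$ is a valid choice; after reordering the columns this block matrix is precisely $\overline{\varphi}$ with $\varphi=[\delta_1(\x)\mid\partial f]$. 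Since only the identity $f=\sum_i a_i x_i$ is used, the argument is insensitive to the particular column $\partial f$, giving the asserted generality.

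The routine points are the exactness of $0\to(f)/f\n\to\n/f\n\to\m\to0$ and the isomorphism $(f)/f\n\cong k$, both immediate from $f\in\n$ and $f$ being a nonzerodivisor. The only step deserving attention — and still only a matter of unwinding identifications — is pinning down the lift $w$: tracing the generator of the subquotient $(f)/f\n$ through $\n\otimes_S R\cong\n/f\n$ and checking that it is hit by $\epsilon(\overline{\partial f})$. I anticipate no genuine obstacle, since the homological content is absorbed entirely by the mapping cone and everything else is bookkeeping.
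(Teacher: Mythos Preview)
Your proof is correct and follows essentially the same route as the paper: both arguments derive the presentation via the mapping cone applied to a short exact sequence relating $\m$, $\n$, and $(f)$. The only cosmetic difference is the order of operations---the paper works over $S$ with the sequence $0\to(f)\to\n\to\m\to0$, builds the mapping cone there, and tensors with $R$ at the end, whereas you tensor first and run the mapping cone over $R$ on $0\to(f)/f\n\to\n/f\n\to\m\to0$; the content is identical.
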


\begin{proof}
Consider the short exact sequence
$$0\longrightarrow (f) \longrightarrow \n \longrightarrow\m \longrightarrow 0.$$
The $S$-ideal $\n$ is resolved by a truncation of the Koszul complex $\K_\dit(\x)$ on $\x = x_1,\ldots,x_n$, and the ideal $(f)$ is resolved by $F_\dit\,:0\longrightarrow S \longrightarrow (f) \longrightarrow 0$, where the differential is multiplication by $f$. Extending $F_\dit$ by adding zeros, we note that there is a morphism of complexes $u_\dit\,:F_\dit \rightarrow \K_\dit(\x)$ with $u_0 = \partial f$ and $u_i =0$ for $i\geq 1$. With this, the mapping cone construction gives an $S$-resolution of $\m$, and in particular we have a free $S$-presentation
$$S^{\binom{n}{2} +1} \overset{\varphi}{\longrightarrow} S^{n} \rightarrow \m\rightarrow 0$$
with $\varphi$ as above. Tensoring by $R$ then yields the claimed free $R$-presentation.
\end{proof}

\begin{rem}\label{derivatives remark}
We note that there are many choices for $\partial f$ and, as the notation suggests, there is a choice involving differentials. If $\chr k=0$, so that $d=\deg f$ is a unit, one may use the Euler formula 
$$d \cdot f=\sum_{i=1}^{n} \frac{\partial f}{\partial x_i}\cdot x_i $$
and choose $\partial f$ to consist of the partial derivatives of $f$. Regardless, any choice of $\partial f$ will do for our purposes. We will revisit this construction in \Cref{Algorithm Section} and apply it more thoroughly.
\end{rem}

With a free presentation of $\m$, we may now begin the study of the Rees ring $\R(\m)$. The importance of a free presentation is that it allows one to introduce the symmetric algebra $\S(\m)$, noting that there is a natural epimorphism $\S(\m)\rightarrow \R(\m)$. Indeed, as a quotient of $R[T_1,\ldots,T_n]$, $\S(\m)$ is defined by the ideal of entries of the matrix product $[T_1\ldots T_n]\cdot \overline{\varphi}$.

As noted in the introduction, one often aims to write $\R(\m)$ as a quotient of $R[T_1,\ldots,T_n]$ as well. However, it will be more accessible to describe $\R(\m)$ as a quotient of a polynomial ring over $S$ first, similar to the observations made in \cite{Weaver23,Weaver24}. To allow for this, we briefly introduce some notation and conventions, differing slightly from those in the previous section. Consider the sequence of maps 
\begin{equation}\label{Composition}
S[T_1,\ldots,T_n] \longrightarrow R[T_1,\ldots,T_n] \longrightarrow \R(\m)
\end{equation}
where the first map quotients by $(f)$, and the second is the natural map as described in the introduction. As $S$ is the polynomial ring of \Cref{main setting}, it will be more convenient to work over $S[T_1,\ldots,T_n]$. We adopt the bigrading on this ring given by $\deg x_i=(1,0)$ and $\deg T_i=(0,1)$ throughout.

\begin{defn}
Let $\J$ denote the kernel of the composition of the maps in (\ref{Composition}). Moreover, we define the $S[T_1,\ldots,T_n]$-ideal $\L = I_2(\psi) +(f,f_1)$ where $\psi$ is the matrix of indeterminates
\begin{equation}\label{psi defn}
\psi = \begin{bmatrix}
  x_1&\ldots & x_n\\
  T_1&\ldots & T_n
\end{bmatrix}    
\end{equation}
and where $f_1$ is the bihomogeneous polynomial $f_1 = [T_1\ldots T_n]\cdot \partial f$, for the choice of $\partial f$ in \Cref{Presentation of m}.
\end{defn}

We will refer to $\J$ as the \textit{defining ideal} of $\R(\m)$, since $ \R(\m) \cong S[T_1,\ldots,T_n]/\J$. From (\ref{Composition}), notice that the defining ideal in the usual sense is precisely $\overline{\J}$, hence it suffices to produce a generating set of $\J$. Moreover, we note that $\L$ defines $\S(\m)$ as a quotient of $S[T_1,\ldots,T_n]$. Indeed, this follows from \Cref{Presentation of m} and noting that the $2\times 2$ minors of $\psi$ are precisely the entries of the matrix product $[T_1,\ldots,T_n]\cdot \delta_1(\x)$. We make use of this fact throughout in the later sections.

\begin{prop}\label{J a saturation}
With the assumptions of \Cref{main setting}, and $\J$ and $\L$ the $S[T_1,\ldots,T_n]$-ideals above, we have $\J = \L:\n^\infty = (I_2(\psi) + (f)):\n^\infty$.
\end{prop}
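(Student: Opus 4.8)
The plan is to establish the equality $\J = \L : \n^\infty$ by a two-step approach: first reducing the claim to the case of $\R(\m)$ being a domain-like quotient where the torsion is controlled, and second identifying $\L : \n^\infty$ with the kernel of the composition in (\ref{Composition}) via a localization argument. The key structural fact driving everything is that, by Micali's theorem and \Cref{Presentation of m}, the ideal $I_2(\psi) + (f)$ defines the symmetric algebra $\S(\m)$ over $S[T_1,\ldots,T_n]$, and $\R(\m)$ is the quotient of $\S(\m)$ by its $R$-torsion; since $\m$ has positive grade, this torsion is precisely the $\n$-power torsion after pulling back to $S[T_1,\ldots,T_n]$.

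First I would show $\L \subseteq \J$, so that $\L : \n^\infty \subseteq \J : \n^\infty = \J$, the last equality because $\R(\m) \cong S[T_1,\ldots,T_n]/\J$ is a subring of the domain $R[t]$ (when $f$ is irreducible; in general $\R(\m)$ is $\n$-torsion-free since $\m$ has positive grade, so $\J = \J : \n^\infty$ regardless). The containment $\L \subseteq \J$ is immediate: $f$ maps to $0$ in $R$, the minors $I_2(\psi)$ vanish in $\S(\m)$ hence in $\R(\m)$ by \Cref{Presentation of m}, and $f_1 = [T_1 \ldots T_n] \cdot \partial f$ is an entry of $[T_1 \ldots T_n] \cdot \overline{\varphi}$, hence lies in the defining ideal of $\S(\m)$ and maps to $0$ in $\R(\m)$.

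For the reverse containment $\J \subseteq \L : \n^\infty$, I would localize at the multiplicatively closed set generated by, say, $x_n$ (or more symmetrically, argue prime-by-prime at primes not containing $\n$). On the locus $x_i \neq 0$ for some $i$, the ideal $\m$ becomes generated by a regular sequence — indeed $R_{x_n}$ is a localization of a hypersurface away from the singular point, hence regular, and $\m R_{x_n} = R_{x_n}$ or is generated by part of a regular system of parameters — so by Micali's theorem $\m$ is of linear type there and $\R(\m)_{x_n} \cong \S(\m)_{x_n}$. This means $\J_{x_n} = \L_{x_n}$ (both equal the defining ideal of the symmetric algebra after inverting $x_n$). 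Concretely, given $g \in \J$, for each $i$ there is $N_i$ with $x_i^{N_i} g \in \L$; taking $N = \max N_i$ gives $\n^{nN} g \subseteq \L$, so $g \in \L : \n^\infty$. The one point requiring care is verifying that $I_2(\psi) + (f)$ really does define $\S(\m)$ and becomes the full defining ideal of $\R(\m)$ after inverting any $x_i$ — this follows by combining \Cref{Presentation of m} (which identifies the entries of $[\underline{T}]\cdot\overline{\varphi}$ with $I_2(\psi)$ together with $f_1$) with the fact that $\S(\m)_{x_i} \cong \R(\m)_{x_i}$ when $\m$ is locally of linear type.

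The main obstacle, I expect, is the bookkeeping needed to confirm that the entries of $[T_1 \ldots T_n]\cdot \overline{\varphi}$ are exactly $\{$minors of $\psi\} \cup \{f_1\}$ (so that $\L$ genuinely defines $\S(\m)$, not merely a subideal), and then arguing cleanly that $\L : \n^\infty$ is stable under the further passage from $S[T_1,\ldots,T_n]$ to $R[T_1,\ldots,T_n]$ — i.e. that no additional saturation is needed beyond the two generators $f, f_1$ of the "non-minor" part. Once the localization $\R(\m)_{x_i} \cong \S(\m)_{x_i}$ is in hand, the saturation statement is essentially formal: $\J$ and $\L$ agree off $V(\n)$, both contain $\L$, and $\J$ is $\n$-saturated (being a defining ideal of a ring with no $\n$-torsion, as $\grade \m > 0$), which forces $\J = \L : \n^\infty$.
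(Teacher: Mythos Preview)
Your approach to the first equality $\J = \L : \n^\infty$ is essentially the paper's: you use that $\m$ has positive grade to get $\J$ is $\n$-saturated, and that $\m$ is locally the unit ideal (hence of linear type) away from $\m$ to get $\J$ and $\L$ agree after inverting any $x_i$. The paper phrases this over $R[T_1,\ldots,T_n]$ first (showing $\overline{\J}=\overline{\L}:\m^\infty$) and then lifts, but the content is identical.

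However, you do not actually prove the second equality $\L:\n^\infty = (I_2(\psi)+(f)):\n^\infty$; you flag it as ``the main obstacle'' and leave it. Your localization argument gives $\J_{x_i}=\L_{x_i}$, not $\L_{x_i}=(I_2(\psi)+(f))_{x_i}$, so something more is needed. The paper closes this gap with a one-line explicit computation: for each $j$,
\[
x_j f_1 = [x_jT_1\ \cdots\ x_jT_n]\cdot\partial f = T_j\,[x_1\ \cdots\ x_n]\cdot\partial f + [(x_jT_1-x_1T_j)\ \cdots\ (x_jT_n-x_nT_j)]\cdot\partial f = T_j f + (\text{element of }I_2(\psi)),
\]
so $f_1\in (I_2(\psi)+(f)):\n$. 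This gives $\L\subseteq (I_2(\psi)+(f)):\n$, and sandwiching yields equality of the two saturations. You should include this computation (or an equivalent one); without it the second equality is unproved.
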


\begin{proof}
As mentioned, $\overline{\J}$ and $\overline{\L}$ are the ideals defining $\R(\m)$ and $\S(\m)$ as quotients of $R[T_1,\ldots,T_n]$, respectively. As $\overline{\L}\subseteq \overline{\J}$ and $\m$ is an ideal of positive grade (hence $\m \R(\m)$ is of positive grade), it follows that $\overline{\L}:\m^\infty \subseteq \overline{\J}$. For the reverse containment, notice that $\m$ is of linear type locally for any $\p\in \spec(R)\setminus\{\m\}$. Indeed, we have $\m_\p = R_\p$ for any such prime $\p$, and the unit ideal is obviously of linear type. Hence the quotient $\overline{\J}/\overline{\L}$ is supported only at $\m$, and so some power of $\m$ annihilates it. Thus $\overline{\J} \subseteq \overline{\L}:\m^\infty$, and so $\overline{\J} = \overline{\L}:\m^\infty$. Now since $f$ is contained in all ideals involved, it is clear that $\J = \L:\n^\infty$, which gives the first equality.

For the second equality, we first note that $f_1 \in (I_2(\psi)+(f)): \n$. Indeed, from the definition of $f_1$ and $\partial f$, for any $x_j$ we have
\begin{equation}\label{f1 in colon}
x_jf_1 = [x_jT_1\ldots x_j T_n] \cdot \partial f = [x_1T_j\ldots x_n T_j] \cdot \partial f +[(x_jT_1- x_1T_j)\ldots (x_jT_n- x_n T_j)] \cdot \partial f 
\end{equation}
where the former summand is $T_j f$ and the latter belongs to $I_2(\psi)$. Thus we have $\L \subseteq (I_2(\psi) +(f)):\n$, and so it follows that 
$$\L:\n^s \subseteq (I_2(\psi) +(f)):\n^{s+1} \subseteq \L:\n^{s+1}$$
for any $s\geq 0$. Now taking $s$ large enough, the second equality follows.
\end{proof}

Whereas the saturation $\L:\n^\infty$ may seem more natural, the description $\J = (I_2(\psi)+(f)):\n^\infty$ will prove to be much simpler to use in practice. More specifically, it will be more straightforward to write this saturation as a colon of a specific power of $\n$. This will follow as we may resolve $(I_2(\psi) +(f))$ easily, and then make use of the tools developed in \cite{KPU20} to bound the generation degrees of certain local cohomology modules.

\begin{prop}\label{resolution prop}
Writing $B=S[T_1,\ldots,T_n]$, the ring $B/(I_2(\psi) +(f))$ has a minimal bigraded free $B$-resolution
$$0\rightarrow F_n \rightarrow \cdots\rightarrow F_1\rightarrow F_0$$
where 
$$F_n = B^{n-1} (-d-n+1,-n+1), \quad  F_i = \begin{array}{c}
B^{\binom{n}{i}(i-1)}(-d-i+1,-i+1)\\
\oplus \\
B^{\binom{n}{i+1}i}(-i,-i)
\end{array},  \quad  F_1 = \begin{array}{c}
B(-d,0)\\
\oplus \\
B^{\binom{n}{2}}(-1,-1)
\end{array}, \quad F_0 =B $$
for $2\leq i\leq n-1$.
\end{prop}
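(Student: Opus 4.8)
The plan is to realize $B/(I_2(\psi)+(f))$ as a mapping cone: first resolve $B/I_2(\psi)$ by its Eagon--Northcott complex, then divide out by $f$. To begin, I would check that $I_2(\psi)$ is a perfect ideal of height $n-1$. Since the $2n$ entries of $\psi$ are distinct variables of $B$, the matrix $\psi$ is generic, so $I_2(\psi)$ has the generic height $n-1$; equivalently, $B/I_2(\psi)\cong\R(\n)$ by the linear-type presentation of a regular sequence recalled in the introduction, so $\dim B/I_2(\psi)=\dim S+1=n+1$ and $\grade I_2(\psi)=\hgt I_2(\psi)=2n-(n+1)=n-1$. Hence $B/I_2(\psi)$ is Cohen--Macaulay of codimension $n-1$, and its minimal bigraded free $B$-resolution is the Eagon--Northcott complex $E_\bullet$ of $\psi$, of length $n-1$, with $E_0=B$ and $E_i=\wedge^{i+1}B^n\otimes S_{i-1}\!\big((B^2)^{*}\big)$ of rank $\binom{n}{i+1}i$ for $1\le i\le n-1$; all of its differentials have entries in the homogeneous maximal ideal of $B$, so $E_\bullet$ is minimal.

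Next, since $\R(\n)\cong B/I_2(\psi)$ is a domain and $f$ is a nonzero form of $S\subseteq\R(\n)$, multiplication by $f$ is injective on $B/I_2(\psi)$, giving the exact sequence
\[
0\longrightarrow\big(B/I_2(\psi)\big)(-d,0)\xrightarrow{\ \cdot f\ }B/I_2(\psi)\longrightarrow B/(I_2(\psi)+(f))\longrightarrow 0.
\]
Lifting $\cdot f$ to a bigraded chain map $E_\bullet(-d,0)\to E_\bullet$ and taking the mapping cone $F_\bullet$ then yields a bigraded free $B$-resolution of $B/(I_2(\psi)+(f))$ with $F_i=E_i\oplus E_{i-1}(-d,0)$; in particular $F_0=B$, $F_n=E_{n-1}(-d,0)$, and the resolution has length $n$. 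It is minimal: because $d\ge1$, multiplication by $f$ strictly raises the $x$-degree, so every component $E_i(-d,0)\to E_i$ of the comparison map consists of forms of positive $x$-degree and hence lies in the homogeneous maximal ideal of $B$; together with the minimality of $E_\bullet$, all differentials of $F_\bullet$ have entries in the homogeneous maximal ideal. (Consistently, $B/(I_2(\psi)+(f))$ is perfect of codimension $n$ since $f$ is a nonzerodivisor on $B/I_2(\psi)$, so there can be no cancellation.) Reading off ranks gives $\beta_0=1$, $\beta_1=1+\binom n2$, $\beta_i=\binom ni(i-1)+\binom n{i+1}i$ for $2\le i\le n-1$, and $\beta_n=n-1$.

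The step I expect to be the real work is pinning down the bigraded twists. Because the two rows of $\psi$ carry different bidegrees---$(1,0)$ for the $x_j$ and $(0,1)$ for the $T_j$---the factor $S_{i-1}\!\big((B^2)^{*}\big)$ occurring in $E_i$ is not concentrated in a single internal degree, so one cannot shift $E_\bullet$ uniformly. The clean way is to make $\psi$ bigraded as a map $B^n(-1,0)\to G$ with $G=B\oplus B(-1,1)$, note that the Eagon--Northcott complex built from this datum resolves $\big(B/I_2(\psi)\big)\otimes\wedge^2 G$, twist by $(\wedge^2 G)^{*}=B(1,-1)$ to obtain the resolution of $B/I_2(\psi)$ itself, and then read the bidegrees of each $E_i$ off the bigradings of the factors $\wedge^{i+1}B^n$ and $S_{i-1}\!\big((B^2)^{*}\big)$; inserting the shift $(-d,0)$ on the $E_{i-1}$-summand coming from the cone then produces the free modules $F_i$. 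Everything else here is standard---genericity of determinantal ideals, Eagon--Northcott, mapping cones, and minimality by degree---so this bigraded bookkeeping is the only genuine obstacle.
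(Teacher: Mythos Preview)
Your strategy—Eagon--Northcott for $B/I_2(\psi)$, then the mapping cone along multiplication by $f$—is exactly the paper's; the paper is in fact terser, deducing minimality from $\hgt(I_2(\psi)+(f))=n$ together with the displayed shifts rather than by checking that all entries of the cone differentials lie in the maximal ideal.

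Your instinct about the bigraded bookkeeping is sharper than the paper's, though, and it points to a genuine issue with the \emph{statement}. You correctly observe that the divided/symmetric--power factor in $E_i$ is not concentrated in a single internal bidegree, and if you actually carry out the computation you outline you will \emph{not} recover the displayed $F_i$. With $\psi\colon B^n(-1,0)\to B\oplus B(-1,1)$ one finds
\[
E_i \;=\; \bigoplus_{j=1}^{i} B^{\binom{n}{i+1}}\bigl(-j,\,-(i{+}1{-}j)\bigr),
\]
so the summands of $E_i$ have total shift $i+1$, not $2i$. Already for $n=3$ this is visible by hand: the two syzygies on the three minors are the columns $[x_3,-x_2,x_1]^t$ and $[T_3,-T_2,T_1]^t$, giving $E_2=B(-2,-1)\oplus B(-1,-2)$ rather than $B^2(-2,-2)$; note this is also what is needed for $\reg_{x,T}\R(\n)=1$, quoted later in the paper. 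The ranks, the length $n$, and your minimality argument are all correct, so the proposition is right in substance but the printed bidegrees for $2\le i\le n$ are off. This is harmless for the sequel: the only downstream use of the resolution is the bound $b_0\big([F_n]_{(*,q)}\big)\le d+n-1$ on the $x$-degree of $F_n$, and the maximum $x$-shift in $E_{n-1}$ is $n-1$ in either version.
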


\begin{proof}
As $\psi$ is a matrix of indeterminates, it is well known that $I_2(\psi)$ is a prime $B$-ideal \cite[2.10]{BV88}. Additionally, we note that $f\notin I_2(\psi)$, which follows from bidegree considerations, hence there is a bigraded short exact sequence
$$0\rightarrow B/I_2(\psi) (-d,0) \overset{\cdot f}{\longrightarrow} B/I_2(\psi) \rightarrow B/(I_2(\psi)+(f))\rightarrow 0.$$
Moreover, we have that $B/I_2(\psi)$ is resolved by the bigraded Eagon--Northcott complex \cite[A2.10]{Eisenbud}, with the appropriate bidegree shifts for the first copy above. Moreover, we note that multiplication by $f$ lifts to a morphism of the Eagon--Northcott complex to itself, hence the mapping cone construction gives the claimed resolution.

Since $I_2(\psi)$ is prime of height $n-1$, and $f\notin I_2(\psi)$, we have $\hgt (I_2(\psi) +(f)) =n$. As this agrees with the length of the resolution above, it follows that this resolution is of minimal length. Moreover, from the bidegree shifts involved, it follows that this is a minimal bigraded free resolution.
\end{proof}

We now describe $\J= (I_2(\psi) + (f)):\n^\infty$ as a colon ideal of a specific power of $\n$, by applying techniques developed in \cite{KPU20} to bound the degrees of certain local cohomology modules. Whereas these tools can be slightly technical, the procedure is relatively straightforward if a suitable graded free resolution is available. We must first briefly recall some notation and terminology used in \cite{KPU20}. For $A$ a nonnegatively graded ring with $A_0$ local, and $M$ an $A$-module, we write $b_0(M)=\inf\{ j \,|\, A(\bigoplus_{i\leq j} M_i) =M\}$ to denote the \textit{maximal generator degree} of $M$. Letting $\mathfrak{M}$ denote the homogeneous maximal $A$-ideal, write $a(M) = \sup\{i\,|\, [H_{\mathfrak{M}}^{\dim A}(M)]_i \neq 0\}$ to denote the \textit{a-invariant} of $M$.

\begin{prop}\label{J correct colon prop}
With the assumptions of \Cref{main setting}, we have $\J  = (I_2(\psi) + (f)):\n^d$. 
\end{prop}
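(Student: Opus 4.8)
The plan is to show that $\n^d$ already annihilates the quotient $\J/(I_2(\psi)+(f))$, which combined with Proposition~\ref{J a saturation} gives the claim. Since we know from Proposition~\ref{J a saturation} that $\J = (I_2(\psi)+(f)):\n^\infty$, it suffices to prove the single containment $\J \cdot \n^d \subseteq I_2(\psi)+(f)$, or equivalently that the natural surjection $\J/(I_2(\psi)+(f)) \twoheadrightarrow$ (a module supported only at $\n B$) is killed by $\n^d$. Concretely, write $C = B/(I_2(\psi)+(f))$ and $\overline{\J} = \J/(I_2(\psi)+(f)) \subseteq C$; this is a $B$-module supported only at the irrelevant ideal $\mathfrak{M} = \n B + (T_1,\dots,T_n)$, so it coincides with a summand of the local cohomology $H^0_{\n B}(C)$. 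The strategy is to bound the maximal generator degree $b_0$ of $H^0_{\n B}(C)$ in the $x$-grading and show it is at most $d-1$, so that multiplication by any degree-$d$ monomial in the $x_i$ lands in $I_2(\psi)+(f)$.

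The key computational step is to apply the machinery of \cite{KPU20} to the minimal bigraded free resolution of $C$ produced in Proposition~\ref{resolution prop}. First I would pass to the quotient ring $A = B/I_2(\psi)$; since $I_2(\psi)$ is prime of height $n-1$ and $f$ is a nonzerodivisor on $A$ (by the bidegree argument in Proposition~\ref{resolution prop}), we have $C = A/(f)$ with $f$ a homogeneous nonzerodivisor of bidegree $(d,0)$, so $\dim C = \dim A - 1 = n$ and $H^0_{\n B}(C)$ is controlled by the local cohomology of $A$ via the long exact sequence coming from $0 \to A(-d,0) \xrightarrow{\cdot f} A \to C \to 0$. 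The relevant output is the $a$-invariant: because $A = B/I_2(\psi)$ is Cohen--Macaulay of dimension $n+1$ (the Eagon--Northcott resolution has length $n-1$ and $B$ has dimension $2n$), its only nonvanishing local cohomology with respect to $\mathfrak{M}$ is in top degree, and one reads off $a(A)$ from the last module $F_{n-1}^{\mathrm{EN}} = B^{n-1}(-n+1,-n+1)$ of the Eagon--Northcott complex together with the shift of $B = F_0$. This yields $a(A) = -(n+1)$ in the total grading, and tracking the bigrading carefully (the contribution $(-n+1,-n+1)$ against the Koszul-type shift $(-1,\dots,-1)$ of length $2n$ in the dualizing twist) gives the sharp $x$-degree bound. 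Feeding this through the $f$-multiplication sequence, $H^0_{\n B}(C)$ lives in $x$-degrees at most $a(A) + d + (\text{correction}) = d-1$; this is exactly where the hypotheses $n \geq 2$ and the precise shifts in Proposition~\ref{resolution prop} are used.

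The main obstacle I anticipate is the bookkeeping with the \emph{bigrading} rather than a single grading: the results in \cite{KPU20} on $a$-invariants and $b_0$ are typically stated for standard graded rings, so I would need to either restrict the bigrading to the $x$-variables (treating the $T_i$ as scalars, i.e. working over the base $k[T_1,\dots,T_n]$, which is a nonnegatively graded ring with Artinian-local-after-localization $\dots$ — more precisely localize at $\n$ and regrade) or invoke a bigraded version of the degree bounds. The cleanest route is probably to localize $B$ at $\n B$, obtaining a $\ast$local ring in the $x$-grading with coefficients in the field of fractions $k(T_1,\dots,T_n)$ twisted appropriately, apply the single-graded version of \cite[results on $b_0$ and $a$-invariants]{KPU20} to the resolution of Proposition~\ref{resolution prop} restricted to $x$-degrees, and conclude $b_0(H^0_{\n B}(C)) \leq d-1$. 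Once that inequality is in hand, the equality $\J = (I_2(\psi)+(f)):\n^d$ is immediate: the containment $\supseteq$ is trivial, and $\subseteq$ follows since any element of $\J$, when multiplied by $\n^d$, has image zero in $H^0_{\n B}(C)$ hence lies in $I_2(\psi)+(f)$.
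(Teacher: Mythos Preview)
Your overall strategy --- show that $\n^d$ annihilates $\A = H^0_{\n B}\big(B/(I_2(\psi)+(f))\big)$ by bounding its $x$-degrees via \cite{KPU20} and the resolution of \Cref{resolution prop} --- is exactly the paper's approach. The difference is in the execution, and your middle paragraph contains a genuine misstep.

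You detour through $A = B/I_2(\psi)$, the sequence $0 \to A(-d,0) \to A \to C \to 0$, and the $a$-invariant $a(A)$ taken with respect to the full irrelevant ideal $\mathfrak{M}$. But the cohomology you must control is $H^0_{\n B}(C)$, and the long exact sequence embeds this in $H^1_{\n B}(A)$, not in any $H^i_{\mathfrak{M}}(A)$. These are different objects: the latter vanish below top degree because $A$ is Cohen--Macaulay, whereas $H^1_{\n B}(A)_{(*,q)}$ is (up to a shift) $H^1_\n(\n^q)$, which is nonzero for every $q\geq 1$ and in fact unbounded in $x$-degree as $q$ grows. So the number $a(A)=-(n+1)$ you compute does not plug into the bound you want, and the unspecified ``correction'' is hiding all of the actual content.

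The paper avoids this by doing cleanly what your last paragraph gestures at. For each fixed $q$, take the $(*,q)$-strand of the $B$-resolution in \Cref{resolution prop}: since each $F_i$ is $B$-free, its strand $[F_i]_{(*,q)}$ is a graded free $S$-module, and the strands assemble into a graded free $S$-resolution of $[C]_{(*,q)}$. Then \cite[3.8]{KPU20} applies directly over the polynomial ring $S$, giving $\A_{(p,q)}=0$ for all $p > b_0\big([F_n]_{(*,q)}\big) + a(S)$. From the shifts recorded in \Cref{resolution prop} one reads $b_0\big([F_n]_{(*,q)}\big)\leq d+n-1$, and $a(S)=-n$, yielding the bound $p\leq d-1$. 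No localization at $\n B$ and no reference to $a(A)$ is needed; the relevant $a$-invariant is that of $S$, not of $A$.
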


\begin{proof}
Writing $B=S[T_1,\ldots,T_n]$ once more, by \Cref{J a saturation}, modulo $(I_2(\psi) + (f))$ the kernel of the induced map $B/(I_2(\psi) + (f)) \rightarrow \R(\m)$ is precisely the local cohomology module $\A=H_\n^0\big(B/(I_2(\psi) + (f))\big)$. The $B$-module $\A$ is naturally bigraded, where $\deg x_i =(1,0)$ and $\deg T_i =(0,1)$ again. Hence for a fixed $q$,
we have
$$\A_{(*,q)} = \bigoplus_p \A_{(p,q)} \cong H_\n^0\big([B/(I_2(\psi) + (f))]_{(*,q)}\big).$$
As $\A$ lives in finitely many degrees, we claim that $\A$ vanishes past degree $d-1$ in the first component of the bigrading, i.e. that $\A_{(p,q)} = 0$ for any $q$ and $p> d-1$.

With the $B$-resolution of $B/(I_2(\psi) +(f))$ given in \Cref{resolution prop}, taking the graded strand in degree $(*,q)$ yields a graded $S$-resolution of $[B/(I_2(\psi) + (f))]_{(*,q)}$. With this and \cite[3.8]{KPU20}, it follows that $\A_{(p,q)} =0$ for all $p>b_0([F_n]_{(*,q)}) + a(S)$, where $F_n$ is the free $B$-module in \Cref{resolution prop}, and $[F_n]_{(*,q)}$ is the free $S$-module $[F_n]_{(*,q)} = \bigoplus_p [F_n]_{(p,q)}$. However, from the degree shifts of the given resolution it follows that $b_0([F_n]_{(*,q)}) \leq d+n-1$, for any $q$. Moreover, since $S$ is a standard graded polynomial ring in $n$ variables, it is well known that $a(S) =-n$ \cite[3.6.15]{BH93}. Hence $\A_{(p,q)} =0$ for all $p>d-1$ and any $q$, and so it follows that $\n^d \A=0$. Thus $\n^d \J \subseteq I_2(\psi)+(f)$ and so $\J  \subseteq (I_2(\psi) + (f)):\n^d$. The reverse containment follows from \Cref{J a saturation}.
\end{proof}

\section{Approximations of Rees rings}\label{Approx Section}

Now that the description $\J  = (I_2(\psi) + (f)):\n^d$ has been introduced, we may begin the search for generators. We follow the so-called \textit{method of divisors} of Rees rings introduced in \cite{KPU11}, and used with great success in \cite{BM16,CPW23, CPW24,Weaver23,Weaver24}. In short, we introduce a simpler ring mapping onto $\R(\m)$ with kernel an ideal of height one. Then we aim to produce a generating set of this kernel, and lift it to a generating set of the defining ideal $\J$.

Notice that there is a natural map $\R(\n) \rightarrow \R(\m)$, which can be seen as $I_2(\psi)$ defines $\R(\n)$ \cite[Chap. I, Theorem 2]{Micali64} and $I_2(\psi)\subseteq \J$. Alternatively, one takes the natural map $S\rightarrow R=S/(f)$, and compares the powers of $\n$ and $\m$. With this, we have the following commutative diagram
\begin{equation}\label{R(n) and R(m) diagram}
    \SelectTips{cm}{}
    \xymatrix{
    S[T_1,\ldots,T_n]\ar_{\mod I_2(\psi)}[d] \ar^{\mod(f)}[rr] & & R[T_1,\ldots,T_n]\ar[d]\\
    \R(\n) \ar[rr] & & \R(\m) }
\end{equation}
involving the maps in (\ref{Composition}), and the observation above. Whereas the previous section concerned the composition of the upper rightmost maps in the diagram above, the current section will consider its factorization along the lower leftmost maps. We begin by introducing notation to permit this.

\begin{defn}
With the diagram (\ref{R(n) and R(m) diagram}), write $\widetilde{\,\cdot\,}$ to denote images modulo $I_2(\psi)$ in $\R(\n)$. Additionally, we define the $S[T_1,\ldots,T_n]$-ideal $\K = I_2(\psi)+(x_n,T_n)$.
\end{defn}

The $\R(\n)$-ideal $\widetilde{\K}$ will inevitably pave the path to a generating set of the defining ideal. For the duration of this section, we explore the various properties of this ideal and its relation to $\widetilde{\n}$ and $\widetilde{\J}$, the latter being the kernel of the induced map $\R(\n)\rightarrow \R(\m)$.

\begin{prop}\label{K CM ht 1 prop}
The Rees ring $\R(\n)$ is a Cohen--Macaulay domain of dimension $n+1$. The $\R(\n)$-ideals $\widetilde{\n}$ and $\widetilde{\K}$ are Cohen--Macaulay of height one. Moreover, $\widetilde{\n}$ is a prime ideal.
\end{prop}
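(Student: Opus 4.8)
The plan is to deduce all three assertions from Micali's presentation $\R(\n) \cong B/I_2(\psi)$, $B = S[T_1,\ldots,T_n]$, together with the classical theory of generic determinantal ideals and two small explicit computations. \emph{First, for $\R(\n)$ itself:} the $2n$ entries of $\psi$ are distinct indeterminates of $B$, so $I_2(\psi)$ is prime of height $n-1$ by \cite[2.10]{BV88}, whence $\R(\n)$ is a domain of dimension $\dim B - (n-1) = n+1$. As observed in the proof of \Cref{resolution prop}, $B/I_2(\psi)$ is resolved by the Eagon--Northcott complex, which has length $n-1 = \hgt I_2(\psi)$; thus $I_2(\psi)$ is perfect and $\R(\n)$ is Cohen--Macaulay. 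I note that this argument applies verbatim to the ideal of $2\times 2$ minors of \emph{any} $2\times m$ matrix of distinct indeterminates (with $m \ge 1$), a remark I will reuse in the last step.

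\emph{Next, the ideal $\widetilde{\n}$.} By definition it is the image of $(x_1,\ldots,x_n)B$ in $\R(\n)$, so $\R(\n)/\widetilde{\n} \cong B/(I_2(\psi) + (x_1,\ldots,x_n))$. Modulo $(x_1,\ldots,x_n)$ the top row of $\psi$ is zero, so every $2\times 2$ minor dies, and hence $\R(\n)/\widetilde{\n} \cong k[T_1,\ldots,T_n]$. This is a Cohen--Macaulay domain of dimension $n$; therefore $\widetilde{\n}$ is prime, $\R(\n)/\widetilde{\n}$ is Cohen--Macaulay, and — using that the Cohen--Macaulay domain $\R(\n)$ is equidimensional and catenary, so that $\hgt I = \dim \R(\n) - \dim \R(\n)/I$ for every ideal $I$ — one gets $\hgt \widetilde{\n} = (n+1) - n = 1$.

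\emph{Finally, the ideal $\widetilde{\K}$,} where $\K = I_2(\psi) + (x_n, T_n)$. Here $\R(\n)/\widetilde{\K} \cong B/(I_2(\psi) + (x_n, T_n))$, and setting $x_n = T_n = 0$ deletes the last column of $\psi$, leaving precisely the $2\times 2$ minors of the $2\times(n-1)$ generic matrix $\psi'$ formed by the remaining columns. Thus $\R(\n)/\widetilde{\K} \cong k[x_1,\ldots,x_{n-1},T_1,\ldots,T_{n-1}]/I_2(\psi')$, which — by the remark at the end of the first step, with $I_2(\psi')$ interpreted as $0$ when $n=2$ — is a Cohen--Macaulay domain of dimension $n$ (indeed it is the Rees ring of $(x_1,\ldots,x_{n-1})$ over $k[x_1,\ldots,x_{n-1}]$). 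Exactly as in the previous step, $\R(\n)/\widetilde{\K}$ is Cohen--Macaulay and $\hgt \widetilde{\K} = 1$ (and in fact $\widetilde{\K}$ is prime, though this is not needed).

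I do not expect a genuine obstacle here: every claim is a direct consequence of the primeness, height, and perfectness of generic $2\times m$ determinantal ideals. The only points that demand any care are the correct identification of the extended ideals $\widetilde{\n}$ and $\widetilde{\K}$ under the isomorphism $\R(\n) \cong B/I_2(\psi)$ — that is, that killing the relevant variables collapses the minors as claimed — and the passage from dimensions of quotients to heights, which is licensed by the Cohen--Macaulayness of $\R(\n)$.
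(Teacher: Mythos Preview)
Your proof is correct and follows essentially the same approach as the paper's: both identify $\R(\n)/\widetilde{\n}\cong k[T_1,\ldots,T_n]$ and $\R(\n)/\widetilde{\K}\cong k[x_1,\ldots,x_{n-1},T_1,\ldots,T_{n-1}]/I_2(\psi')$ via the obvious collapse of the minors, and then read off primeness, Cohen--Macaulayness, and height from these descriptions and the generic determinantal theory. The only cosmetic differences are that the paper phrases the $\widetilde{\n}$ computation in the language of the special fiber ring $\F(\n)$, and computes $\hgt\widetilde{\K}$ by first finding $\hgt\K$ in $B$ rather than via $\dim\R(\n)-\dim\R(\n)/\widetilde{\K}$.
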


\begin{proof}
The ring $\R(\n)$ is easily seen to be a domain of dimension $n+1$, as $S$ is a domain of dimension $n$. Moreover, the Cohen--Macaulayness follows as $\R(\n)$ is defined by an ideal of minors of generic height \cite[A2.13]{Eisenbud}. For the statements regarding $\widetilde{\n}$, notice this ideal defines the special fiber ring $\F(\n) =\R(\n)/\n \R(\n)$. As $\n$ is an ideal of linear type, it follows that $\F(\n) \cong k[T_1,\ldots,T_n]$, hence $\widetilde{\n}$ is a prime Cohen--Macaulay ideal of height one. For the statement on $\widetilde{\K}$, notice that 
\begin{equation}\label{K and matrix}
\K = I_2(\psi) +(x_n,T_n) =I_2\left(\begin{bmatrix}
   x_1&\cdots &x_{n-1}\\
    T_1&\cdots& T_{n-1}
\end{bmatrix}\right) +(x_n,T_n)
\end{equation}
which is easily seen to be Cohen--Macaulay of height $(n-2) +2 =n$. Thus modulo $I_2(\psi)$, we have that $\widetilde{\K}$ is Cohen--Macaulay of height one.
\end{proof}

Alternatively, the Cohen--Macaulayness of $\widetilde{\K}$ can be seen from \cite[A2.13, A2.14]{Eisenbud}, but we opt for the simpler proof above. However, we will use these tools later in \Cref{Defining Ideal Section} to describe the Cohen--Macaulayness of particular powers of $\widetilde{\K}$. With the properties of $\widetilde{\K}$ and $\widetilde{\n}$ established in \Cref{K CM ht 1 prop}, we now show that the symbolic powers of these ideals are \textit{linked} \cite{Huneke82}.

\begin{prop}\label{linkage prop}
With the $\R(\n)$-ideals $\widetilde{\K}$ and $\widetilde{\n}$ above, we have the following.
\begin{enumerate}
    \item[(a)] $\widetilde{\n}^i = \widetilde{\n}^{(i)}$, 

    \item[(b)] $(\widetilde{x_n}^i):\widetilde{\K}^{(i)} = \widetilde{\n}^{(i)}$,

    \item[(c)] $(\widetilde{x_n}^i):\widetilde{\n}^{(i)} = \widetilde{\K}^{(i)}$,

\end{enumerate}
for all $i\in \mathbb{N}$. 
\end{prop}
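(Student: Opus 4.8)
I would prove (a) on its own, and then obtain (b) and (c) together from classical linkage, after reducing the required equality of ideals to a computation at the two height-one primes through which $x_n$ passes.

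For (a), since $\widetilde{\n}=\n\R(\n)$ we have $\widetilde{\n}^i=\n^i\R(\n)$, so it suffices to show the module $G_i:=\R(\n)/\n^i\R(\n)=\bigoplus_{j\geq 0}\n^{j}/\n^{j+i}$ has no embedded primes. I would use the chain of $\R(\n)$-submodules $G_i=M_0\supseteq M_1\supseteq\cdots\supseteq M_i=0$, where $M_k=\bigoplus_{j\geq 0}\n^{j+k}/\n^{j+i}$, whose successive quotients $M_k/M_{k+1}\cong\bigoplus_{j\geq 0}\n^{j+k}/\n^{j+k+1}$ are, up to a degree shift, the associated graded ring $\operatorname{gr}_\n(S)$. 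Since $S$ is a polynomial ring, $\operatorname{gr}_\n(S)\cong k[x_1,\dots,x_n]$ is Cohen--Macaulay of dimension $n$; as every slice then has dimension $n$, repeated use of the depth lemma along the chain shows $G_i$ is Cohen--Macaulay of dimension $n$, hence unmixed. Because $\sqrt{\widetilde{\n}^i}=\widetilde{\n}$ and $\widetilde{\n}$ is prime, this forces $\ass(\R(\n)/\widetilde{\n}^i)=\{\widetilde{\n}\}$, so $\widetilde{\n}^i$ is $\widetilde{\n}$-primary and hence equals $\widetilde{\n}^{(i)}$.

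For (b) and (c): by \Cref{K CM ht 1 prop}, $\R(\n)$ is a Cohen--Macaulay domain, so $\widetilde{x_n}^i$ is a nonzerodivisor, and it lies in $\widetilde{\n}^{(i)}\cap\widetilde{\K}^{(i)}$ since $x_n\in\n$ and $x_n\in\K$. Again by \Cref{K CM ht 1 prop}, $\widetilde{\n}^{(i)}$ (being $\widetilde{\n}$-primary) and $\widetilde{\K}^{(i)}$ are unmixed of height one; here one uses that $\widetilde{\K}$ is prime, as $\R(\n)/\widetilde{\K}$ is a polynomial ring modulo the ideal of $2\times2$ minors of a generic matrix (cf.\ (\ref{K and matrix})), hence a domain \cite{BV88}. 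Applying linkage \cite{Huneke82} to $\widetilde{x_n}^i\in\widetilde{\n}^{(i)}$, the ideal $\widetilde{\K}':=(\widetilde{x_n}^i):\widetilde{\n}^{(i)}$ is unmixed of height one and satisfies $(\widetilde{x_n}^i):\widetilde{\K}'=\widetilde{\n}^{(i)}$, so it remains only to show $\widetilde{\K}'=\widetilde{\K}^{(i)}$. Since both of these ideals are unmixed of height one, the equality can be checked after localizing at each height-one prime $\mathfrak p$ of $\R(\n)$.

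The local input I would establish is that the only height-one primes of $\R(\n)$ containing $x_n$ are $\widetilde{\n}$ and $\widetilde{\K}$ (read off from the minimal primes of $(x_n)$ in $S[T_1,\dots,T_n]/I_2(\psi)$, via (\ref{K and matrix})), and that $\R(\n)_{\widetilde{\n}}$ and $\R(\n)_{\widetilde{\K}}$ are discrete valuation rings with uniformizer $x_n$: for $\widetilde{\n}$ because $T_n$ is a unit there and $x_jT_n=x_nT_j$, and for $\widetilde{\K}$ because $x_1$ is a unit there and $x_1T_n=x_nT_1$, so in each case the maximal ideal collapses to $(x_n)$. Granting this, at $\mathfrak p\notin\{\widetilde{\n},\widetilde{\K}\}$ the element $x_n$ is a unit, so $\widetilde{\K}'$ and $\widetilde{\K}^{(i)}$ both localize to the unit ideal; at $\mathfrak p=\widetilde{\n}$ one has $\widetilde{\n}^{(i)}\R(\n)_{\widetilde{\n}}=(x_n^i)$ and $\widetilde{\K}^{(i)}\R(\n)_{\widetilde{\n}}=\R(\n)_{\widetilde{\n}}$, so $\widetilde{\K}'\R(\n)_{\widetilde{\n}}=(x_n^i):(x_n^i)=\R(\n)_{\widetilde{\n}}$ too; and the symmetric computation holds at $\mathfrak p=\widetilde{\K}$. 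Hence $\widetilde{\K}'=\widetilde{\K}^{(i)}$, which is (c), and (b) follows from $(\widetilde{x_n}^i):\widetilde{\K}^{(i)}=(\widetilde{x_n}^i):\widetilde{\K}'=\widetilde{\n}^{(i)}$. The step I expect to be the main obstacle is precisely this local analysis at $\widetilde{\n}$ and $\widetilde{\K}$ — recognizing both localizations as discrete valuation rings with the common uniformizer $x_n$ and controlling which height-one primes contain $x_n$ — together with the bookkeeping that upgrades the $i=1$ linkage into a statement about all symbolic powers at once.
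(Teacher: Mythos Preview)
Your argument is correct, but the paper's proof is considerably more direct on both fronts.

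For (a), the paper simply regrades $S[T_1,\dots,T_n]$ by $\deg x_i=1$, $\deg T_i=0$ and observes that $\G(\widetilde{\n})\cong\R(\n)$ is a domain; this immediately forces every $\widetilde{\n}^i$ to be $\widetilde{\n}$-primary. Your filtration by the $M_k$ with slices isomorphic to $\operatorname{gr}_{\n}(S)$ is really the same associated-graded data, but you run it through a depth chase to obtain Cohen--Macaulayness of $\R(\n)/\widetilde{\n}^i$, which is more than is needed here.

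For (b) and (c), the paper avoids any local DVR analysis. From the minor relations $\widetilde{x_i}\widetilde{T_n}=\widetilde{x_n}\widetilde{T_i}$ one reads off $\widetilde{\K}\,\widetilde{\n}\subseteq(\widetilde{x_n})$, hence $\widetilde{\K}^{(i)}\widetilde{\n}^{(i)}\subseteq(\widetilde{x_n}^i)$ after localizing at height-one primes and contracting; this gives one containment in both statements at once. The reverse containments come from the single observation that $\widetilde{\K}\nsubseteq\widetilde{\n}$ (since $\widetilde{T_n}\notin\widetilde{\n}$), so $\widetilde{\K}^{(i)}$ and $\widetilde{\n}^{(i)}$ share no associated prime: together with $\widetilde{x_n}^i\in\widetilde{\n}^{(i)}\cap\widetilde{\K}^{(i)}$ this forces the colons to land inside the target ideals. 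Your route---invoking linkage to produce an unmixed $\widetilde{\K}'$, then identifying $\R(\n)_{\widetilde{\n}}$ and $\R(\n)_{\widetilde{\K}}$ as DVRs with common uniformizer $x_n$ and matching $\widetilde{\K}'$ with $\widetilde{\K}^{(i)}$ prime by prime---works and yields extra structural information (the primality of $\widetilde{\K}$, the explicit valuations of $x_n$), but the paper gets by without ever naming the minimal primes of $(\widetilde{x_n})$ or computing in any localization.
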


\begin{proof} We proceed as in the proof of \cite[3.9]{BM16} throughout.
\begin{enumerate}
    \item[(a)] Temporarily regrading $S[T_1,\ldots,T_n]$ with $\deg x_i= 1$ and $\deg T_i=0$, we see that $\G(\widetilde{\n}) \cong \R(\n)$, where $\G(\widetilde{\n})$ is the associated graded ring of the ideal $\widetilde{\n}$. Since $\R(\n)$ is a domain, the first claim follows.  

\vspace{1mm}

\item[(b)] In $\R(\n)$ we have $\widetilde{x_i}\widetilde{T_n} = \widetilde{x_n}\widetilde{T_i}$ for $1\leq i\leq n$. Thus it follows that $\widetilde{\K} \widetilde{\n} \subseteq (\widetilde{x_n})$, and so $\widetilde{\K}^i\widetilde{\n}^i \subseteq(\widetilde{x_n}^i)$ for any $i\geq 1$. Now localizing at prime ideals of height one and contracting, we see that $\widetilde{\K}^{(i)}\widetilde{\n}^{(i)} \subseteq (\widetilde{x_n}^i)$. With this we have $\widetilde{\n}^{(i)} \subseteq (\widetilde{x_n}^i):\widetilde{\K}^{(i)}$, hence we need only show the reverse containment. Since $\widetilde{T_n}\notin \widetilde{\n}$, we note that $\widetilde{\K} \nsubseteq \widetilde{\n}$. As $\widetilde{\n}$ is the unique associated prime of $\widetilde{\n}^{(i)}$, it follows that $\widetilde{\K}^{(i)}$ and $\widetilde{\n}^{(i)}$ have no associated primes in common. With this, and noting that $\widetilde{x_n}^i\in \widetilde{\n}^{(i)}$, it follows that $(\widetilde{x_n}^i):\widetilde{\K}^{(i)}\subseteq \widetilde{\n}^{(i)}$ as well.

\vspace{1mm}

\item[(c)] As noted in the proof of (b), there is a containment $\widetilde{\K}^{(i)}\widetilde{\n}^{(i)} \subseteq (\widetilde{x_n}^i)$, and so we have $\widetilde{\K}^{(i)} \subseteq (\widetilde{x_n}^i):\widetilde{\n}^{(i)}$. Again noting that $\widetilde{\K}^{(i)}$ and $\widetilde{\n}^{(i)}$ have no associated primes in common, and that $\widetilde{x_n}^i\in \widetilde{\K}^{(i)}$, it then follows that $(\widetilde{x_n}^i):\widetilde{\n}^{(i)} \subseteq \widetilde{\K}^{(i)}$.\qedhere
\end{enumerate}
\end{proof}

Items (b) and (c) of \Cref{linkage prop} show that the ideals $\widetilde{\n}^{(i)}$ and $\widetilde{\K}^{(i)}$ are \textit{linked} through the complete intersection $(\widetilde{x_n}^i)$. Moreover, since they have no common associated prime, they are said to be \textit{geometrically} linked.

\begin{cor}\label{K SCM and generically a CI}
The ideal $\widetilde{\K}$ is generically a complete intersection. Moreover, $\widetilde{\K}$ is a strongly Cohen--Macaulay $\R(\n)$-ideal. 
\end{cor}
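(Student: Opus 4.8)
The plan is to deduce both assertions from the linkage established in \Cref{linkage prop} together with the structural facts recorded in \Cref{K CM ht 1 prop}. First I would address the claim that $\widetilde{\K}$ is generically a complete intersection. Since $\widetilde{\K}$ is Cohen--Macaulay of height one, it is unmixed, so its associated primes are exactly its minimal primes, all of height one. Localizing at any such prime $\q$, I want to show $\widetilde{\K}_\q$ is generated by a single element, equivalently that it is a complete intersection of height one in the one-dimensional local ring $\R(\n)_\q$. The key input is part (b) of \Cref{linkage prop}: $\widetilde{\K}$ is linked to the prime ideal $\widetilde{\n}$ via the regular element $\widetilde{x_n}$, and the two ideals share no associated prime. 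Hence at a minimal prime $\q$ of $\widetilde{\K}$ we have $\q \neq \widetilde{\n}$, so $\widetilde{\n}_\q = \R(\n)_\q$; then from $(\widetilde{x_n}):\widetilde{\K} = \widetilde{\n}$ one gets, after localizing, $(\widetilde{x_n}):\widetilde{\K}_\q = \R(\n)_\q$, which forces $\widetilde{\K}_\q = (\widetilde{x_n})_\q$, a principal ideal generated by a nonzerodivisor. This shows $\widetilde{\K}$ is generically a complete intersection.

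Next, for the strong Cohen--Macaulayness, the cleanest route is to invoke the behavior of the Koszul homology under linkage. Recall (Huneke's theory of linkage and strongly Cohen--Macaulay ideals, as in \cite{Huneke82}) that if $I$ and $J$ are geometrically linked ideals in a Cohen--Macaulay local ring and $J$ is strongly Cohen--Macaulay, then so is $I$. We have, over the Cohen--Macaulay ring $\R(\n)$, that $\widetilde{\K}$ is geometrically linked to $\widetilde{\n}$ through the complete intersection $(\widetilde{x_n})$ by \Cref{linkage prop}(b)--(c) and the disjointness of associated primes noted just after that proposition. So it remains to verify that $\widetilde{\n}$ is strongly Cohen--Macaulay. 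But $\widetilde{\n}$ is generated by the regular sequence analogue coming from the fact, recorded in the proof of \Cref{K CM ht 1 prop}, that $\R(\n)/\widetilde{\n}\,\R(\n) \cong \F(\n) \cong k[T_1,\ldots,T_n]$; more precisely $\widetilde{\n}$ is the ideal generated by $\widetilde{x_1},\ldots,\widetilde{x_n}$, and under the regrading used in \Cref{linkage prop}(a) these form a $d$-sequence (indeed $\n$ is of linear type), which makes $\widetilde{\n}$ strongly Cohen--Macaulay. Alternatively, since $\widetilde{\n}$ is a prime Cohen--Macaulay ideal of height one generated by $n$ elements in a Cohen--Macaulay ring and the Rees algebra $\R(\widetilde{\n}) \cong \R(\n)$ is Cohen--Macaulay, one may cite the standard criterion that such ideals of linear type with Cohen--Macaulay Rees algebra are strongly Cohen--Macaulay.

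I expect the main obstacle to be pinning down the strong Cohen--Macaulayness of $\widetilde{\n}$ cleanly, since ``strongly Cohen--Macaulay'' is a statement about all the Koszul homology modules of a generating set, not just the top one. The honest options are: (i) argue directly that $\widetilde{x_1},\ldots,\widetilde{x_n}$ is a $d$-sequence in $\R(\n)$ (which follows from $\n$ being of linear type after the regrading) and then apply Huneke's result that ideals generated by $d$-sequences with Cohen--Macaulay symmetric algebra are strongly Cohen--Macaulay; or (ii) compute the Koszul homology of $\widetilde{x_1},\ldots,\widetilde{x_n}$ on $\R(\n)$ explicitly using the determinantal presentation $\R(\n) = B/I_2(\psi)$ and the Eagon--Northcott complex. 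Option (i) is shorter and is the one I would write up, with (ii) as a fallback. Once $\widetilde{\n}$ is known to be strongly Cohen--Macaulay, the linkage transfer of this property to $\widetilde{\K}$ is immediate, completing the proof of the corollary.
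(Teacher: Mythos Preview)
Your argument for the first assertion (generically a complete intersection) is correct and is essentially the paper's: both localize at an associated prime $\q$ of $\widetilde{\K}$, use that $\q \neq \widetilde{\n}$ since the two ideals share no associated prime, and then apply \Cref{linkage prop} to obtain $\widetilde{\K}_\q = (\widetilde{x_n})_\q$.

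For the second assertion your route diverges from the paper's and has real gaps. The paper does not pass through $\widetilde{\n}$ at all: it observes that $\widetilde{\K} = (\widetilde{x_n}, \widetilde{T_n})$ is two-generated of height one, hence an \emph{almost complete intersection}, and since it is also generically a complete intersection in the Cohen--Macaulay ring $\R(\n)$, \cite[2.2]{Huneke83} yields strong Cohen--Macaulayness in one line. Your linkage-transfer strategy, by contrast, runs into two difficulties. First, the assertion that strong Cohen--Macaulayness passes along a geometric link in a merely Cohen--Macaulay ambient ring is not a theorem in \cite{Huneke82}; Huneke's transfer results of that type require the ambient ring to be Gorenstein, and $\R(\n) \cong S[T_1,\ldots,T_n]/I_2(\psi)$ is Gorenstein only when $n=2$. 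Second, neither of your proposed justifications that $\widetilde{\n}$ is strongly Cohen--Macaulay is a result one can simply cite: being generated by a $d$-sequence, or being of linear type with Cohen--Macaulay Rees algebra, does not by itself force all Koszul homology modules to be Cohen--Macaulay---those hypotheses are closer to \emph{consequences} of strong Cohen--Macaulayness in the Herzog--Simis--Vasconcelos framework than to sufficient conditions for it. Your fallback of computing the Koszul homology of $\widetilde{x_1},\ldots,\widetilde{x_n}$ on $\R(\n)$ explicitly might eventually succeed, but it is far more work than the almost-complete-intersection observation for $\widetilde{\K}$ itself, which you overlooked.
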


\begin{proof}
The first statement is a consequence of geometric linkage. Since $\widetilde{\K}$ and $\widetilde{\n}$ share no associated prime, by \Cref{linkage prop} we have that $\widetilde{\K}_\p = (\widetilde{x_n})_\p: \R(\n)_\p = (\widetilde{x_n})_\p$ for any associated prime $\p$ of $\widetilde{\K}$. As such, $\widetilde{\K}$ is generically a complete intersection.

For the second statement, recall that $\widetilde{\K}$ is Cohen--Macaulay of height one by \Cref{K CM ht 1 prop}. Moreover, since $\widetilde{\K} = (\widetilde{x_n},\widetilde{T_n})$, it is an almost complete intersection ideal. Since $\widetilde{\K}$ is also generically a complete intersection, it follows from \cite[2.2]{Huneke83} that $\widetilde{\K}$ is strongly Cohen--Macaulay.
\end{proof}

We now give a description of the ideal $\widetilde{\J}$, noting that this is the kernel of the induced map $\R(\n) \rightarrow \R(\m)$ in (\ref{R(n) and R(m) diagram}). Consider the divisorial ideal $\D = \frac{\widetilde{f}\widetilde{\K}^{(d)} }{\widetilde{x_n}^d}$, and note that this is actually an $\R(\n)$-ideal by \Cref{linkage prop}, since $f \in \n^d$.

\begin{thm}\label{J=D}
   With the assumptions of \Cref{main setting}, we have $\widetilde{\J} = \D.$
\end{thm}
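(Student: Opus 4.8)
The plan is to prove the two containments separately, exploiting the description $\J = (I_2(\psi)+(f)):\n^d$ from \Cref{J correct colon prop} and the linkage relations of \Cref{linkage prop}. First I would establish $\D \subseteq \widetilde{\J}$. Since $\D = \widetilde{f}\,\widetilde{\K}^{(d)}/\widetilde{x_n}^d$ and $\widetilde{x_n}^d \in \widetilde{\K}^{(d)}$, any element of $\D$ has the form $\widetilde{f}\,\widetilde{g}/\widetilde{x_n}^d$ with $\widetilde{g}\in\widetilde{\K}^{(d)}$; by part (b) of \Cref{linkage prop}, $\widetilde{g}\,\widetilde{\n}^{(d)}\subseteq(\widetilde{x_n}^d)$, so $\widetilde{f}\widetilde{g}/\widetilde{x_n}^d$ multiplies $\widetilde{\n}^{(d)} = \widetilde{\n}^d$ into $(\widetilde{f})$, i.e. this element lies in $(\widetilde{f}):_{\R(\n)}\widetilde{\n}^d$. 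Lifting to $S[T_1,\dots,T_n]$, this says a preimage lies in $(I_2(\psi)+(f)):\n^d = \J$; so its image in $\R(\n)$ lies in $\widetilde{\J}$. (One must check $\D$ is genuinely an ideal of $\R(\n)$, not just of its fraction field — but this is exactly the content of $\widetilde{x_n}^d \mid \widetilde{f}\,\widetilde{\K}^{(d)}$, which follows from $\widetilde{f}\in\widetilde{\n}^d$ together with part (b), as remarked before the theorem.)

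For the reverse containment $\widetilde{\J}\subseteq\D$, the key point is that $\widetilde{\J}$ is an unmixed height-one ideal, so it is determined by its localizations at height-one primes of $\R(\n)$. I would argue $\widetilde{\J}$ is unmixed of height one: $\R(\n)$ is Cohen--Macaulay (\Cref{K CM ht 1 prop}), $\widetilde{\J}$ defines $\R(\m)$ which has dimension $n = (n+1)-1$, and since $\m$ has positive grade $\widetilde{\J}$ is a nonzero ideal in a domain, hence has height exactly one; unmixedness should follow because $\R(\m)$ is (almost) Cohen--Macaulay or, more directly, because $\R(\n)/\widetilde{\J}\cong\R(\m)$ is torsion-free over $R$ and one can rule out embedded primes. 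Then it suffices to check $\widetilde{\J}_\p = \D_\p$ at each height-one prime $\p$. At primes not containing $\widetilde{\K}$, both sides become the unit ideal (for $\widetilde{\J}$: away from $V(\widetilde{\m})$, $\m$ is of linear type so the map $\R(\n)\to\R(\m)$ is an isomorphism locally, forcing $\widetilde{\J}_\p = \R(\n)_\p$; for $\D$: $\widetilde{\K}^{(d)}_\p = \R(\n)_\p$). At an associated prime $\p$ of $\widetilde{\K}$, \Cref{K SCM and generically a CI} gives $\widetilde{\K}_\p = (\widetilde{x_n})_\p$, so $\D_\p = \widetilde{f}(\widetilde{x_n}^d)_\p/(\widetilde{x_n}^d)_\p = (\widetilde{f})_\p$; and one checks $\widetilde{\J}_\p = (\widetilde{f})_\p$ as well, since locally at such a prime the comparison of $\R(\n)$ and $\R(\m)$ reduces to killing $f$ in a ring where $\widetilde{x_n}$ is a nonzerodivisor. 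Hence the two divisorial ideals agree at every height-one prime, so they are equal.

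I expect the main obstacle to be the reverse inclusion, specifically the careful verification that $\widetilde{\J}$ has no embedded primes — i.e. that it is a genuine divisorial (reflexive, unmixed height-one) ideal so that the ``agree in codimension one'' argument is valid. This is where the structure of $\R(\m)$ (its Cohen--Macaulayness properties, or the torsion-freeness of powers of $\m$ over the domain $R$, using $n\geq 2$) has to be invoked carefully; the local computations at height-one primes are then routine given \Cref{linkage prop} and \Cref{K SCM and generically a CI}. An alternative route that sidesteps unmixedness would be to show directly that $\widetilde{\J} = (\widetilde{f}):_{\R(\n)}\widetilde{\n}^d$ (from \Cref{J correct colon prop}, after reducing mod $I_2(\psi)$) and then identify this colon ideal with $\D$ using part (b) of \Cref{linkage prop} and the fact that $\widetilde{\n}^{(d)} = \widetilde{\n}^d$; this turns the problem into the purely ideal-theoretic identity $(\widetilde{f}):\widetilde{\n}^{(d)} = \widetilde{f}\,\widetilde{\K}^{(d)}/\widetilde{x_n}^d$, which is a standard consequence of the linkage $(\widetilde{x_n}^d):\widetilde{\K}^{(d)} = \widetilde{\n}^{(d)}$ once one knows $\widetilde{f}\in(\widetilde{x_n}^d):\bigl((\widetilde{x_n}^d):\widetilde{\K}^{(d)}\bigr)$.
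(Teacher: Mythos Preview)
Your inclusion $\D\subseteq\widetilde{\J}$ is fine and matches the paper. The reverse inclusion, however, has two issues.

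First, the unmixedness you need is for the \emph{wrong} ideal. You try to show $\widetilde{\J}$ is unmixed of height one, but the justifications you offer are circular: the almost Cohen--Macaulayness of $\R(\m)$ is proved later in the paper (\Cref{Cohen--Macaulayness of R(m)}) and relies on \Cref{J=D}, and a direct argument ruling out embedded primes of $\widetilde{\J}$ would amount to knowing $\R(\m)$ satisfies $S_1$, which is not yet available. The paper sidesteps this entirely by observing that $\D\cong\widetilde{\K}^{(d)}$ is unmixed of height one (symbolic powers are unmixed). Once $\D\subseteq\widetilde{\J}$ is known, it suffices to check $\D_\p=\widetilde{\J}_\p$ at the associated primes of $\D$, which are all height one; no unmixedness of $\widetilde{\J}$ is required.

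Second, your case split at height-one primes is by whether $\p$ contains $\widetilde{\K}$, and your claim that both sides become the unit ideal when $\widetilde{\K}\nsubseteq\p$ is false. A height-one prime $\p\neq\widetilde{\n}$ containing $\widetilde{f}$ (these exist whenever $f$ is not a prime power) satisfies $\widetilde{\J}_\p=(\widetilde{f})_\p\neq\R(\n)_\p$; likewise $\D_\p=(\widetilde{f})_\p$ there, not the unit ideal. The paper's split is by $\p=\widetilde{\n}$ versus $\p\neq\widetilde{\n}$: away from $\widetilde{\n}$ both ideals localize to $(\widetilde{f})_\p$ via \Cref{linkage prop} and \Cref{J correct colon prop}, while at $\widetilde{\n}$ both become the unit ideal (for $\widetilde{\J}$ this uses a dimension count on $\F(\m)$).

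Your alternative route, however, is correct and arguably cleaner than the paper's localization argument. Reducing \Cref{J correct colon prop} modulo $I_2(\psi)$ gives $\widetilde{\J}=(\widetilde{f}):\widetilde{\n}^d$, and the identity $(\widetilde{f}):\widetilde{\n}^{(d)}=\widetilde{f}\,\widetilde{\K}^{(d)}/\widetilde{x_n}^d$ follows by a direct manipulation in the domain $\R(\n)$: if $a\widetilde{\n}^{(d)}\subseteq(\widetilde{f})$ then $a\widetilde{x_n}^d=\widetilde{f}b$ for some $b$, and multiplying $b\widetilde{\n}^{(d)}$ by $\widetilde{f}$ and cancelling shows $b\in(\widetilde{x_n}^d):\widetilde{\n}^{(d)}=\widetilde{\K}^{(d)}$. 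This avoids all localization and unmixedness considerations.
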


\begin{proof}
We first show that $\D \subseteq \widetilde{\J}$. Recall that $\J = (I_2(\psi) +(f)):\n^d$ by \Cref{J correct colon prop}, hence we need only show that $\D \cdot \widetilde{a} \subseteq  (\widetilde{f})$ for any $a\in \n^d$. However, this is clear since
$$\D \cdot \widetilde{a} = \frac{\widetilde{f}\widetilde{\K}^{(d)} }{\widetilde{x_n}^d} \cdot \widetilde{a} = \frac{\widetilde{a}\widetilde{\K}^{(d)} }{\widetilde{x_n}^d} \cdot \widetilde{f} \subseteq (\widetilde{f}),$$
noting that $\tfrac{\widetilde{a}\widetilde{\K}^{(d)} }{\widetilde{x_n}^d}$ is an $\R(\n)$-ideal by \Cref{linkage prop}.

With the containment $\D \subseteq \widetilde{\J}$, it is enough to show equality locally at the associated primes of $\D$. Notice that $\D \cong \widetilde{\K}^{(d)}$, and the latter is unmixed of height one by \Cref{linkage prop} and \cite[0.1]{Huneke82}. As such, $\D$ is also an unmixed $\R(\n)$-ideal of height one, and so it is enough to show that $\D_\p = \widetilde{\J}_\p$ for any prime $\R(\n)$-ideal $\p$ with $\hgt \p =1$. 
Recall from \Cref{K CM ht 1 prop} that $\widetilde{\n}$ is such a prime ideal of $\R(\n)$. If $\p \neq \widetilde{\n}$, we note that $\widetilde{\n}^d_\p = \R(\n)_\p$. Thus by \Cref{linkage prop}, we have $\widetilde{\K}^{(d)}_\p=(\widetilde{x_n}^d)_\p$, and so  $\D_\p = (\widetilde{f})_\p$. Similarly, by \Cref{J correct colon prop} we see that $\widetilde{\J}_\p =(\widetilde{f})_\p$ as well. 

Thus the proof will be complete once it has been shown that $\D$ and $\widetilde{\J}$ agree locally at $\widetilde{\n}$. We first claim that $\widetilde{\J} \nsubseteq \widetilde{\n}$, and so $\widetilde{\J}_{\widetilde{\n}} = \R(\n)_{\widetilde{\n}}$. Indeed, from the isomorphism $\R(\m) \cong \R(\n)/\widetilde{\J}$, it follows that the fiber ring of $\m$ is $\F(\m)\cong \R(\n)/(\widetilde{\J}+\widetilde{\n})$, which has dimension at most $\dim R =n-1$ \cite[5.1.4]{VasconcelosBook94}. However, recall that $\R(\n)/\widetilde{\n} \cong k[T_1,\ldots,T_n]$, and so it follows that $\widetilde{\J} \nsubseteq \widetilde{\n}$.

Thus we must show that $\D_{\widetilde{\n}}$ is the unit ideal as well. Recall from the proof of \Cref{linkage prop} that $\widetilde{\K} \nsubseteq \widetilde{\n}$, hence $\widetilde{\K}^{(d)}_{\widetilde{\n}} = \R(\n)_{\widetilde{\n}}$. With this, by \Cref{linkage prop} we have $(\widetilde{x_n}^d)_{\widetilde{\n}}:\widetilde{\n}^d_{\widetilde{\n}} = \R(\n)_{\widetilde{\n}}$. Thus $\widetilde{\n}^d_{\widetilde{\n}} \subseteq (\widetilde{x_n}^d)_{\widetilde{\n}}$ and so $\widetilde{\n}^d_{\widetilde{\n}} =(\widetilde{x_n}^d)_{\widetilde{\n}}$, as the reverse containment is clear. Similarly, by \Cref{J correct colon prop} and recalling that $\widetilde{\J}_{\widetilde{\n}} = \R(\n)_{\widetilde{\n}}$, we have $\widetilde{\n}^d_{\widetilde{\n}} \subseteq (\widetilde{f})_{\widetilde{\n}}$ and so $\widetilde{\n}^d_{\widetilde{\n}} =(\widetilde{f})_{\widetilde{\n}}$ as $f\in \n^d$. Now since $(\widetilde{f})_{\widetilde{\n}} = (\widetilde{x_n}^d)_{\widetilde{\n}}$, it follows that $\D_{\widetilde{\n}} =\widetilde{\K}^{(d)}_{\widetilde{\n}} = \R(\n)_{\widetilde{\n}}$.
\end{proof}


\section{Downgraded sequences}\label{Algorithm Section}

In this section we introduce a recursive algorithm that yields equations of the defining ideal $\J$. This process is an adaptation of the methods used in \cite{HS14,RS22}, beginning with an initial polynomial and producing a new \textit{downgraded} polynomial at each step. We also note that similar recursive processes have been previously employed to produce equations of Rees rings in \cite{BM16,CHW08,Weaver23,Weaver24}. Before we introduce this algorithm, we recall a bit of earlier notation, which will be used throughout.

\begin{notat}
For any bihomogeneous polynomial $g\in S[T_1,\ldots,T_n]$, write $\partial g$ to denote a column of bihomogeneous elements, of the same bidegree, such that 
$$[x_1\ldots x_n] \cdot \partial g = g.$$
As a convention, we set $\partial g =0$ if $g=0$.
\end{notat}

This of course agrees with the convention used in \Cref{Presentation of m}, now extended to $S[T_1,\ldots,T_n]$. Similar to the observation made in \Cref{derivatives remark}, the column $\partial g$ is not unique, and there are often many choices. 
Despite this non-uniqueness, we will soon see that the ideals obtained from the following process are well defined.

\begin{defn}\label{algorithm defn}
With the notation above and the assumptions of \Cref{main setting}, we produce a sequence of polynomials and ideals recursively, as follows. Set $f_0 =f$ and $\J_0 = I_2(\psi)+(f_0)$. For $i\leq d=\deg f$ suppose that $(\J_0,f_0), \ldots, (\J_{i-1},f_{i-1})$ have been constructed. To construct the $i$th pair, set $f_i = [T_1\ldots T_n]\cdot \partial f_{i-1}$ and let $\J_i = \J_{i-1} +(f_i)$. We call $f_0,\ldots,f_i$ the $i$th \textit{downgraded sequence} of $f$.
\end{defn}

Notice that at each step, the $i$th ideal is $\J_i = I_2(\psi) +(f_0,\ldots,f_i)$. From the construction of $\partial f_i$, it follows that the degree, with respect to $x_1,\ldots,x_n$, decreases by one at each step, hence the terminology of \textit{downgraded} sequence. We formalize this as follows.

\begin{prop}\label{fi bideg remark}
A downgraded sequence of $f$ consists of nonzero polynomials. That is, for $0\leq i\leq d$ we have $f_i\neq 0$. Moreover, each $f_i$ is bihomogeneous of bidegree $(d-i,i)$.
\end{prop}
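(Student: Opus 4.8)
The claim has two parts: (i) bihomogeneity with bidegree $(d-i,i)$, and (ii) nonvanishing of each $f_i$ for $0 \le i \le d$. Part (i) is a straightforward induction on $i$. I would start with $f_0 = f$, which is bihomogeneous of bidegree $(d,0)$ by hypothesis. Assuming $f_{i-1}$ is bihomogeneous of bidegree $(d-i+1, i-1)$, the column $\partial f_{i-1}$ is chosen to have entries that are bihomogeneous of a single bidegree with $[x_1 \ldots x_n] \cdot \partial f_{i-1} = f_{i-1}$; since multiplying by an $x_j$ raises the $x$-degree by one, each entry of $\partial f_{i-1}$ has bidegree $(d-i, i-1)$. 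Then $f_i = [T_1 \ldots T_n] \cdot \partial f_{i-1}$ multiplies these by the $T_j$, raising the $T$-degree by one, so $f_i$ is bihomogeneous of bidegree $(d-i, i)$. This closes the induction and gives (i).

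For part (ii), the content is that the recursion does not prematurely terminate before step $d$. The natural approach is again induction: it suffices to show that if $f_{i-1} \neq 0$ then $f_i \neq 0$, for $1 \le i \le d$. Here I would exploit that $f_{i-1}$ has positive $x$-degree $d-i+1 \ge 1$ in this range, so $f_{i-1} \in \n \cdot S[T_1,\ldots,T_n]$, and hence a column $\partial f_{i-1}$ with $[x_1 \ldots x_n]\cdot \partial f_{i-1} = f_{i-1}$ genuinely exists (this is where the convention $\partial g = 0$ only when $g = 0$ is used) and, being a lift of a nonzero element, is itself a nonzero column. The key point is then to argue that applying $[T_1 \ldots T_n] \cdot (-)$ to a nonzero column $\partial f_{i-1}$ yields a nonzero polynomial. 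One clean way to see this: the map $S[T_1,\ldots,T_n]^n \to S[T_1,\ldots,T_n]$ given by $v \mapsto [T_1 \ldots T_n] \cdot v$ has kernel exactly the module of Koszul syzygies on $T_1,\ldots,T_n$, i.e. the image of $\delta_2(\underline{T})$; so $f_i = 0$ forces $\partial f_{i-1}$ to be a $T$-Koszul syzygy. But $\partial f_{i-1}$ has entries of $x$-degree $d-i \ge 0$ and $T$-degree $i-1$, and I would check by a bidegree/substitution argument that a nonzero such column cannot be a syzygy on $T_1, \ldots, T_n$ while also satisfying $[x_1 \ldots x_n] \cdot \partial f_{i-1} = f_{i-1} \ne 0$ — concretely, if $[T_1\ldots T_n]\cdot \partial f_{i-1}=0$ then specializing does not help directly, so instead I would use that $x_1,\ldots,x_n$ and $T_1,\ldots,T_n$ play symmetric roles: $f_{i-1} = [x_1 \ldots x_n] \cdot \partial f_{i-1} \ne 0$ already shows $\partial f_{i-1}$ is not an $x$-Koszul syzygy, and a degree count (the $T$-degree $i-1$ being smaller, or a monomial-support argument) rules out its being a $T$-Koszul syzygy.

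Alternatively — and this may be the cleaner route — I would avoid the syzygy analysis entirely by choosing $\partial f_{i-1}$ explicitly: write $f_{i-1} = \sum_\alpha c_\alpha x^\alpha$ with each monomial $x^\alpha$ of positive $x$-degree, pick for each $\alpha$ a variable $x_{j(\alpha)}$ dividing $x^\alpha$, and set the $j$-th entry of $\partial f_{i-1}$ to be $\sum_{j(\alpha) = j} c_\alpha x^\alpha/x_{j(\alpha)}$. Then $f_i = \sum_\alpha c_\alpha T_{j(\alpha)} x^\alpha / x_{j(\alpha)}$, and one argues nonvanishing by, say, the lex-leading term: the leading monomial of $f_{i-1}$ contributes a term that cannot be cancelled. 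Since the \emph{ideals} $\J_i$ are asserted elsewhere to be well defined independent of the choices, it is legitimate to fix such a convenient $\partial$ for the purpose of proving $f_i \ne 0$. The main obstacle is precisely this nonvanishing step: making rigorous that passing from $\partial f_{i-1} \ne 0$ to $f_i = [T_1\ldots T_n]\cdot \partial f_{i-1}$ preserves nonzero-ness, for which the explicit choice of $\partial$ with a controlled leading term is the most robust argument. Everything else is bookkeeping with bidegrees.
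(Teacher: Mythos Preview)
Your handling of the bidegree claim is fine and matches the paper. The nonvanishing argument, however, has a genuine gap in both routes you sketch.

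In your syzygy route you correctly note that $f_i=0$ forces $\partial f_{i-1}$ to lie in the span of the Koszul syzygies on $T_1,\ldots,T_n$. But your proposed contradiction---that such a column cannot simultaneously satisfy $[x_1\ldots x_n]\cdot\partial f_{i-1}=f_{i-1}\neq 0$, by a ``degree count''---does not go through for $i\ge 2$. A $T$-Koszul syzygy $v$ with entries of bidegree $(d-i,i-1)$ is perfectly possible once $i-1\ge 1$, and for such $v$ one has $[x_1\ldots x_n]\cdot v\in I_2(\psi)$, which can certainly be nonzero. So ``not an $x$-Koszul syzygy'' and ``not a $T$-Koszul syzygy'' are independent conditions, and no bidegree obstruction separates them. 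The paper closes this gap by going \emph{two} steps back: from $\partial f_{i-1}$ being a $T$-Koszul syzygy one gets $f_{i-1}=[x_1\ldots x_n]\cdot\partial f_{i-1}\in I_2(\psi)$; then, writing $f_{i-1}=[T_1\ldots T_n]\cdot\partial f_{i-2}$ and using that $I_2(\psi)$ is exactly the image of $[T_1\ldots T_n]$ applied to the $x$-Koszul syzygy module, one forces $\partial f_{i-2}$ into that module (modulo $T$-syzygies) and hence $f_{i-2}=[x_1\ldots x_n]\cdot\partial f_{i-2}$ to vanish, contradicting the minimality of $i$. You are missing this descent step.

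In your explicit-choice route the leading-term argument can indeed show $f_i\neq 0$ for \emph{one} choice of $\partial$, but your justification for transferring this to all choices is wrong: well-definedness of the ideals $\J_i$ only says that two choices of $f_i$ differ by an element of $I_2(\psi)$, and nothing prevents one choice from being zero while another is a nonzero element of $I_2(\psi)$. To make this route work you would need to show $f_i\notin I_2(\psi)$ for your explicit choice (for instance by checking that its leading monomial is not in the initial ideal of $I_2(\psi)$, or by observing that its image in $\R(\n)\subset S[t]$ is $t^i f\neq 0$), which you do not do.
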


\begin{proof}
As the construction above is recursive, we prove the first statement on the nonvanishing by induction. The second statement then follows immediately from the first by construction. Recall that $f$ is a homogeneous polynomial of degree $d\geq 1$ in $S$, hence $f=f_0 \neq 0$. Now suppose that $i\geq 1$, and that $f_i=0$ for some $i$, and we may take $i$ to be minimal, in the sense that $f_j\neq 0$ for $j<i$. Now since $f_i=[T_1\ldots T_n]\cdot \partial f_{i-1}$ and $f_i=0$, it follows that $\partial f_{i-1}$ is a syzygy on $T_1,\ldots,T_n$. As this is a regular sequence, $\partial f_{i-1}$ is then contained in the span of its Koszul syzygies. Since $f_{i-1}=[x_1\ldots x_n]\cdot \partial f_{i-1}$, it then follows that $f_{i-1} \in I_2(\psi)$.

If $i=1$, this is a contradiction 
as $f_0=f$ is nonzero of bidegree $(d,0)$ and $I_2(\psi)$ is generated in bidegree $(1,1)$, hence we must have $i\geq 2$ here. Thus $f_{i-1} = [T_1\ldots T_n] \cdot \partial f_{i-2}$ and since $f_{i-1}\in I_2(\psi)$, it follows that $\partial f_{i-2}$ is contained in the span of Koszul syzygies on $x_1,\ldots, x_n$. However, then $f_{i-2} = [x_1\ldots x_n]\cdot \partial f_{i-2} =0$ which is a contradiction as $i$ was taken to be smallest.
\end{proof}

As noted, there are many choices for $\partial f_i$ within each step of the process described above. Nevertheless, the ideals constructed from downgraded sequences are unique.

\begin{prop}\label{Ji well defined}
The ideals $\J_0,\ldots,\J_d$ are well defined, and depend only on $f$.
\end{prop}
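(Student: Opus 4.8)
The plan is to prove, by (strong) induction on $i$, the strengthened statement: for any two downgraded sequences $f_0,\ldots,f_i$ and $f_0',\ldots,f_i'$ of $f$ — arising from possibly different choices of the columns $\partial f_0,\ldots,\partial f_{i-1}$ — one has $\J_i=\J_i'$, and moreover $f_i-f_i'\in\J_{i-1}$ when $i\geq 1$. Granting this, the proposition is immediate: $I_2(\psi)$ and $f=f_0$ determine $\J_0$, and each subsequent $\J_i$ is then independent of all intermediate choices, so $\J_0,\ldots,\J_d$ depend only on $f$.

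The base case $\J_0=I_2(\psi)+(f)$ involves no choices. For the inductive step I would rely on two elementary facts. First, since $x_1,\ldots,x_n$ is a regular sequence in the polynomial ring $S[T_1,\ldots,T_n]$, any column $c$ with $[x_1\ldots x_n]\cdot c=0$ lies in the module generated by the Koszul syzygies $x_\ell e_k-x_k e_\ell$. Second, $[T_1\ldots T_n]\cdot(x_\ell e_k-x_k e_\ell)=x_\ell T_k-x_k T_\ell$ is, up to sign, a $2\times 2$ minor of $\psi$, while $[T_1\ldots T_n]\cdot(T_q e_p-T_p e_q)=0$; and conversely each minor $\mu$ of $\psi$ can be written as $[x_1\ldots x_n]\cdot(T_q e_p-T_p e_q)$ for suitable $p,q$.

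Now fix $i\geq 1$ and assume the claim for all smaller indices; in particular $\J_{i-1}=\J_{i-1}'$, which I denote $\J_{i-1}$, and (for $i\geq 2$) $f_{i-1}-f_{i-1}'\in\J_{i-2}=I_2(\psi)+(f_0,\ldots,f_{i-2})$, while for $i=1$ one has $f_0=f_0'=f$. Writing $f_{i-1}-f_{i-1}'=\sum_m a_m\mu_m+\sum_{\ell=0}^{i-2}b_\ell f_\ell$ with $\mu_m$ ranging over the $2\times 2$ minors of $\psi$ (an empty sum over $\ell$ when $i=1$), set $c:=\sum_m a_m(T_{q}e_{p}-T_{p}e_{q})+\sum_\ell b_\ell\,\partial f_\ell$, choosing $p,q$ for each $\mu_m$ as above and taking $\partial f_\ell$ to be the columns from the first sequence; then $[x_1\ldots x_n]\cdot c=f_{i-1}-f_{i-1}'$. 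Since $[x_1\ldots x_n]\cdot\partial f_{i-1}=f_{i-1}$ and $[x_1\ldots x_n]\cdot\partial f_{i-1}'=f_{i-1}'$ as well, the column $\partial f_{i-1}-\partial f_{i-1}'-c$ is a syzygy of $x_1,\ldots,x_n$, hence a combination of Koszul syzygies. Applying $[T_1\ldots T_n]\cdot(-)$, invoking the second fact above, and using the defining relations $[T_1\ldots T_n]\cdot\partial f_\ell=f_{\ell+1}$, I would conclude
$$f_i-f_i'\;=\;[T_1\ldots T_n]\cdot(\partial f_{i-1}-\partial f_{i-1}')\;\in\;I_2(\psi)+(f_1,\ldots,f_{i-1})\;\subseteq\;\J_{i-1},$$
whence $\J_i=\J_{i-1}+(f_i)=\J_{i-1}+(f_i')=\J_i'$, completing the induction.

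I do not expect a serious obstacle. The one subtlety is bookkeeping: at each stage one must track which column $\partial f_\ell$, and from which of the two sequences, is being used, and verify that passing between valid choices only alters things by Koszul syzygies, hence by elements of $I_2(\psi)$. Organizing this cleanly is precisely what the strengthened inductive hypothesis $f_i-f_i'\in\J_{i-1}$ is designed to handle, after which the remaining verifications are routine.
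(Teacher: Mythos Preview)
Your argument is correct and follows the same strategy as the paper: induct on $i$ and use that any two admissible columns $\partial g$ differ by a combination of Koszul syzygies on $x_1,\ldots,x_n$, so that applying $[T_1\ldots T_n]\cdot(-)$ sends the discrepancy into $I_2(\psi)\subseteq\J_{i-1}$. The paper's inductive step, however, only compares two choices of $\partial f_{i-1}$ for \emph{one and the same} $f_{i-1}$ and concludes $f_i-f_i'\in I_2(\psi)$; your strengthened hypothesis $f_i-f_i'\in\J_{i-1}$, together with the auxiliary column $c$, also handles the situation where earlier choices have already produced distinct $f_{i-1}\neq f_{i-1}'$, a case the paper's write-up does not explicitly treat. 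So the two proofs share the same core, with yours carrying the extra bookkeeping you correctly anticipated. (As a small sharpening: once one knows $f_{i-1}-f_{i-1}'\in I_2(\psi)$, your construction of $c$ needs no $f_\ell$-terms, and the induction in fact yields $f_i-f_i'\in I_2(\psi)$ throughout.)
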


\begin{proof}
We proceed inductively, noting that the ideal $\J_0$ is certainly well defined. Now suppose that $1\leq i \leq d$ and the ideals $\J_0,\ldots,\J_{i-1}$ are well defined. Letting $\partial f_{i-1}$ and $\partial f_{i-1}'$ denote two columns with 
$$[x_1\ldots x_n]\cdot \partial f_{i-1} = f_{i-1} = [x_1\ldots x_n]\cdot \partial f_{i-1}'$$
we must show that $\J_{i-1}+(f_i) = \J_{i-1}+(f_i')$, where $f_i$ and $f_i'$ are obtained as in \Cref{algorithm defn}.

Notice from the expression above that $\partial f_{i-1} - \partial f_{i-1}'$ is a syzygy on $x_1,\ldots,x_n$. As this is a regular sequence, $\partial f_{i-1}$ and $\partial f_{i-1}'$ must differ by an element in the span of Koszul syzygies on $x_1,\ldots, x_n$. Hence $f_i=[T_1\ldots T_n] \cdot \partial f_{i-1}$ and $f_i'=[T_1\ldots T_n] \cdot  \partial f_{i-1}'$ differ by an element of $I_2(\psi)$. As $I_2(\psi) \subseteq \J_{i-1}$, it is clear that $\J_{i-1}+(f_i) = \J_{i-1}+(f_i')$, and the claim follows.
\end{proof}

\begin{rem}
We note that employing the process of \Cref{algorithm defn} is relatively simple in practice. Since \Cref{Ji well defined} shows the ideal does not depend on the choice of $\partial f_i$, the method of downgraded sequences may be taken as an \textit{exchange} process. Namely, for every monomial in the support of $f=f_0$, one need only exchange exactly one of the $x_i$ for the corresponding $T_i$ in order to produce $f_1$. One then moves onto $f_1$ and repeats the process until there is no $x_i$ to exchange, i.e. once  one reaches $f_d$. 
\end{rem}

For the convenience of the reader, we offer a handful of examples to illustrate the simplicity of the algorithm above, and how there are multiple choices at each step, yet the resulting ideal is the same.

\begin{ex} \,
\begin{enumerate}
    \item[(a)] The method of downgraded sequences is simple when $f$ is a monomial, following the remark above. In particular, if $f$ is a pure power, say $f=x_n^d$, notice that $f_1=x_n^{d-1}T_n, \,f_2=x_n^{d-2}T_n^2,\ldots ,f_d = T_n^d$. The ideal of the full downgraded sequence is $\J_d = I_2(\psi)+ (x_n,T_n)^d$ in this case.

\vspace{1mm}

    \item[(b)] As a more interesting example, let $S=k[x_1,x_2,x_3]$ and $f= x_1^2x_2 + x_1x_3^2$. The downgraded sequence of $f$ may be taken as
    \[
    \begin{array}{cccc}
    f_0= x_1^2x_2 + x_1x_3^2, &  f_1=x_1x_2T_1 + x_3^2T_1, &  f_2=x_2T_1^2 + x_3T_1T_3, & f_3 =T_1^2T_2 + T_1T_3^2. 
\end{array}
\]
Alternatively, one may make different choices and take the downgraded sequence to be
\[
\begin{array}{cccc}
    f_0'= x_1^2x_2 + x_1x_3^2, &  f_1'=x_1^2T_2+x_1x_3T_3, &  f_2'=x_1T_1T_2+x_3T_1T_3, & f_3' =T_1^2T_2 + T_1T_3^2. 
\end{array}
\]
Despite there being multiple choices, it follows from \Cref{Ji well defined} that $I_2(\psi) + (f_0,f_1,f_2,f_3) = I_2(\psi) + (f_0',f_1',f_2',f_3')$ where 
$$\psi = \begin{bmatrix}
    x_1& x_2& x_3\\
    T_1&T_2&T_3
\end{bmatrix}$$
and this is also easily verified by hand or a computer algebra system, such as \textit{Macaulay2} \cite{Macaulay2}.
\end{enumerate}
\end{ex}

As can be seen in the examples above, there is a symmetry between $f=f_0$ and $f_d$. Indeed, the latter is precisely the former's counterpart in $k[T_1,\ldots,T_n]$. Following the remark above, it is clear that this happens from the viewpoint of an exchange process.

\begin{rem}\label{Jd contained in J}
  For $0\leq i\leq d$, we have $\J_i \subseteq \J$.
\end{rem}

\begin{proof}
We proceed by induction once more. The claim is clear when $i=0$ by \Cref{J correct colon prop}, so suppose $1\leq i\leq d$ and that $\J_{i-1}=I_2(\psi)+(f_0,\ldots,f_{i-1})\subseteq \J$. We claim that $f_i \in \J_{i-1}:\n$, which we show by creating an expression similar to (\ref{f1 in colon}). Indeed, from the definition of $f_i$ and $\partial f_{i-1}$, for any $x_j$ we have
\begin{equation}\label{fi in colon}
x_jf_i = [x_jT_1\ldots x_j T_n] \cdot \partial f_{i-1} = [x_1T_j\ldots x_n T_j] \cdot \partial f_{i-1} +[(x_jT_1- x_1T_j)\ldots (x_jT_n- x_n T_j)] \cdot \partial f_{i-1}.
\end{equation}
The first summand is exactly $T_jf_{i-1}\in \J_{i-1}$ and the second summand is contained in $I_2(\psi) \subseteq \J_{i-1}$, hence $f_i \in \J_{i-1}:\n$. Since $\J_i = \J_{i-1}+(f_i)$, we have that $\J_i \subseteq \J_{i-1}:\n \subseteq \J:\n = \J$,  where the second containment follows from the induction hypothesis and the equality holds since $\J$ is saturated by \Cref{J a saturation}.
\end{proof}

Now that it has been shown that $\J_d \subseteq \J$, recall from \Cref{J=D} that $\widetilde{\J} =\frac{\widetilde{f}\widetilde{\K}^{(d)} }{\widetilde{x_n}^d}$. We show that there is a similar description for $\widetilde{\J_d}$.

\begin{thm}\label{Jd divisorial}
With the assumptions of \Cref{main setting} and $\J_d$ the ideal of the full downgraded sequence of \Cref{algorithm defn}, we have $\widetilde{\J_d} = \frac{\widetilde{f}\widetilde{\K}^{d} }{\widetilde{x_n}^d}$.
\end{thm}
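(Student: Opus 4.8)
The plan is to prove the inclusion $\widetilde{\J_d} \subseteq \frac{\widetilde{f}\widetilde{\K}^d}{\widetilde{x_n}^d}$ and the reverse inclusion separately, imitating the structure of the proof of \Cref{J=D} but replacing the symbolic power $\widetilde{\K}^{(d)}$ with the ordinary power $\widetilde{\K}^d$. The key point that makes this work is that $\widetilde{\K}$ is an almost complete intersection that is generically a complete intersection and strongly Cohen--Macaulay (by \Cref{K SCM and generically a CI}), so its ordinary powers are well-behaved; in particular, since $\widetilde{\K}^d$ and $\widetilde{\n}^d = \widetilde{\n}^{(d)}$ have no associated prime in common and $\widetilde{x_n}^d$ is a nonzerodivisor lying in $\widetilde{\K}^d \cap \widetilde{\n}^d$, the ideal $\frac{\widetilde{f}\widetilde{\K}^d}{\widetilde{x_n}^d}$ is again an honest unmixed $\R(\n)$-ideal of height one isomorphic to $\widetilde{\K}^d$. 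I expect to first verify this well-definedness and unmixedness, so that, as in \Cref{J=D}, it suffices to check the claimed equality after localizing at the height-one primes of $\R(\n)$.

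For the inclusion $\frac{\widetilde{f}\widetilde{\K}^d}{\widetilde{x_n}^d} \subseteq \widetilde{\J_d}$, the natural approach is induction on $d$, or more precisely an explicit computation tracking the generators $f_0,\ldots,f_d$. Since $\widetilde{\K} = (\widetilde{x_n},\widetilde{T_n})$, the power $\widetilde{\K}^d$ is generated by the monomials $\widetilde{x_n}^{d-j}\widetilde{T_n}^j$ for $0\le j\le d$, so $\frac{\widetilde{f}\widetilde{\K}^d}{\widetilde{x_n}^d}$ is generated by the elements $\frac{\widetilde{f}\,\widetilde{T_n}^j}{\widetilde{x_n}^j}$ for $0 \le j \le d$. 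The plan is to show that, up to elements of $I_2(\psi)$, one has $\widetilde{x_n}^j f_j = \widetilde{f}\,\widetilde{T_n}^j$ in $\R(\n)$, so that the image of $f_j$ in $\R(\n)$ is exactly $\frac{\widetilde{f}\,\widetilde{T_n}^j}{\widetilde{x_n}^j}$; this should follow by induction from the downgrading recursion $f_j = [T_1\cdots T_n]\cdot \partial f_{j-1}$ together with the identity $\widetilde{x_i}\widetilde{T_n} = \widetilde{x_n}\widetilde{T_i}$ used in the proof of \Cref{linkage prop}(b), exactly as in the displays (\ref{f1 in colon}) and (\ref{fi in colon}). Combined with $\widetilde{\J_d} \supseteq \widetilde{(f_0,\ldots,f_d)}$, this gives one containment.

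For the reverse inclusion $\widetilde{\J_d} \subseteq \frac{\widetilde{f}\widetilde{\K}^d}{\widetilde{x_n}^d}$, I would localize at each height-one prime $\p$ of $\R(\n)$. For $\p \neq \widetilde{\n}$ we have $\widetilde{\n}^d_\p = \R(\n)_\p$, hence by \Cref{linkage prop} $\widetilde{\K}^{(d)}_\p = (\widetilde{x_n}^d)_\p$; since $\widetilde{\K}$ is generically a complete intersection, in fact $\widetilde{\K}^d_\p = \widetilde{\K}^{(d)}_\p = (\widetilde{x_n}^d)_\p$, so the right-hand side localizes to $(\widetilde{f})_\p$, and one checks $\J_d \subseteq \J$ (\Cref{Jd contained in J}) localizes into $(\widetilde{f})_\p$ as well using $\J = (I_2(\psi)+(f)):\n^d$. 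For $\p = \widetilde{\n}$, recall from the proof of \Cref{linkage prop} that $\widetilde{\K}\nsubseteq \widetilde{\n}$, so $\widetilde{\K}^d_{\widetilde{\n}} = \R(\n)_{\widetilde{\n}}$ and the right-hand side localizes to the unit ideal, which trivially contains $(\widetilde{\J_d})_{\widetilde{\n}}$. Since both ideals are unmixed of height one and agree at every height-one prime, they are equal.

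The main obstacle I anticipate is the step identifying the image of $f_j$ in $\R(\n)$ with the divisorial generator $\frac{\widetilde{f}\,\widetilde{T_n}^j}{\widetilde{x_n}^j}$: this requires carefully pushing the downgrading recursion through the quotient by $I_2(\psi)$ and keeping track of the ambiguity in $\partial f_{j-1}$ (which only matters modulo Koszul syzygies, hence modulo $I_2(\psi)$, by \Cref{Ji well defined}), while also confirming that these $d+1$ elements actually generate $\frac{\widetilde{f}\widetilde{\K}^d}{\widetilde{x_n}^d}$ rather than merely lying in it. A secondary subtlety is making sure $\widetilde{\K}^d$ is genuinely unmixed of height one — for which strong Cohen--Macaulayness of $\widetilde{\K}$ (an almost complete intersection of height one, via \Cref{K SCM and generically a CI}) is exactly the right input, since powers of such ideals are Cohen--Macaulay in low codimension.
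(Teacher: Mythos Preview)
Your central computation --- showing by induction that $\widetilde{f}\,\widetilde{T_n}^{\,j} = \widetilde{x_n}^{\,j}\,\widetilde{f_j}$ in $\R(\n)$ via the identity $\widetilde{x_i}\widetilde{T_n}=\widetilde{x_n}\widetilde{T_i}$ --- is exactly the argument the paper gives, and it is correct. But you have buried it inside machinery that is not needed. The paper observes that this single computation already proves \emph{equality}, not just one containment: since $\widetilde{\,\cdot\,}$ kills $I_2(\psi)$, the ideal $\widetilde{\J_d}$ is simply $(\widetilde{f_0},\ldots,\widetilde{f_d})$, while $\widetilde{\K}^d=(\widetilde{x_n},\widetilde{T_n})^d$ is generated by the monomials $\widetilde{x_n}^{\,d-j}\widetilde{T_n}^{\,j}$, so $\frac{\widetilde{f}\widetilde{\K}^d}{\widetilde{x_n}^d}$ is generated by $\frac{\widetilde{f}\,\widetilde{T_n}^{\,j}}{\widetilde{x_n}^{\,j}}$. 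Your identity (in the domain $\R(\n)$, where one may cancel $\widetilde{x_n}^{\,j}$) says precisely that these two generating sets coincide elementwise. So both inclusions drop out at once.

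Consequently your entire second half --- the localization at height-one primes, the appeal to unmixedness of $\widetilde{\K}^d$, and the invocation of strong Cohen--Macaulayness --- is superfluous for this theorem. (Those tools are relevant elsewhere in the paper, namely in verifying $\widetilde{\K}^d=\widetilde{\K}^{(d)}$ for \Cref{Equal criteria}, but not here.) Note also that your localization argument as written would need $\widetilde{\J_d}$ itself to be unmixed of height one, which you have not established independently; the clean way out is simply to recognize that the generator-matching already finishes the proof.
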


\begin{proof}
Recall that $\widetilde{\K} = (\widetilde{x_n},\widetilde{T_n})$, hence we must show that $\widetilde{f_0} (\widetilde{x_n},\widetilde{T_n})^d =\widetilde{x_n}^d(\widetilde{f_0},\ldots,\widetilde{f_d})$. In particular, it suffices to show that $\widetilde{f_0} \widetilde{x_n}^{d-i}\widetilde{T_n}^i = \widetilde{x_n}^d\widetilde{f_i}$  or rather, since $\R(\n)$ is a domain, that $\widetilde{f_0} \widetilde{T_n}^i = \widetilde{x_n}^i\widetilde{f_i}$ for $0\leq i\leq d$.
There is nothing to be shown for the case $i=0$, hence we take $i=1$ to be the initial case, and proceed by induction. Recall that $f_0 = [x_1\ldots x_n]\cdot \partial f_0$ and $f_1 = [T_1\ldots T_n]\cdot \partial f_0$. Thus modulo $I_2(\psi)$, we have 
    $$\widetilde{f_0} \widetilde{T_n}  = [\widetilde{T_n}\widetilde{x_1}\ldots \widetilde{T_n}\widetilde{x_n}]\cdot \widetilde{\partial f_0} = [\widetilde{T_1}\widetilde{x_n}\ldots \widetilde{T_n}\widetilde{x_n}]\cdot \widetilde{\partial f_0}= \widetilde{x_n} \widetilde{f_1}$$
    and the initial claim follows.

If $d=1$ we are finished, so suppose that $2\leq i\leq d$ and the claim holds up to $i-1$. In order to show that $\widetilde{f_0}\widetilde{T_n}^i = \widetilde{x_n}^i\widetilde{f_i}$, we first note that, from the induction hypothesis, we have 
$$\widetilde{f_0} \widetilde{T_n}^i = \widetilde{f_0}\widetilde{T_n}^{i-1} \cdot \widetilde{T_n} = \widetilde{x_n}^{i-1}\widetilde{f_{i-1}}\cdot \widetilde{T_n} = \widetilde{x_n}^{i-1}\cdot\widetilde{f_{i-1}} \widetilde{T_n}. $$
Hence we will be finished once it has been shown that $\widetilde{f_{i-1}}\widetilde{T_n} = \widetilde{x_n}\widetilde{f_i}$, which follows exactly as before. Indeed, since
$f_{i-1} = [x_1\ldots x_n]\cdot \partial f_{i-1}$ and $f_i = [T_1\ldots T_n]\cdot \partial f_{i-1}$, modulo $I_2(\psi)$ we have 
$$\widetilde{f_{i-1}}\widetilde{T_n} = [\widetilde{T_n}\widetilde{x_1}\ldots \widetilde{T_n}\widetilde{x_n}]\cdot \widetilde{\partial f}_{i-1} = [\widetilde{T_1}\widetilde{x_n}\ldots \widetilde{T_n}\widetilde{x_n}]\cdot \widetilde{\partial f}_{i-1}= \widetilde{x_n}\widetilde{f_i}$$
which completes the proof.
\end{proof}

With the description of $\widetilde{\J_d}$ above, we now give a criterion for when the algorithm of downgraded sequences yields a generating set of the defining ideal $\J$.

\begin{cor}\label{Equal criteria}
With the assumptions of \Cref{main setting}, $\J= \J_d$ in $S[T_1,\ldots,T_n]$ if and only if $\widetilde{\K}^{d} = \widetilde{\K}^{(d)}$ in $\R(\n)$.
\end{cor}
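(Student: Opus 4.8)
The plan is to prove the biconditional by exploiting the two divisorial descriptions already established: $\widetilde{\J} = \frac{\widetilde{f}\widetilde{\K}^{(d)}}{\widetilde{x_n}^d}$ from \Cref{J=D}, and $\widetilde{\J_d} = \frac{\widetilde{f}\widetilde{\K}^{d}}{\widetilde{x_n}^d}$ from \Cref{Jd divisorial}. Since $\R(\n)$ is a domain (\Cref{K CM ht 1 prop}), multiplication by the nonzerodivisors $\widetilde{f}$ and $\widetilde{x_n}^d$ (note $f\neq 0$, and $\widetilde{x_n}$ is a nonzerodivisor since $\widetilde{\n}$, its unique associated prime via \Cref{linkage prop}, does not contain $\widetilde{x_n}$ is not quite right—rather $\widetilde{x_n}$ is a regular element because $\R(\n)$ is a domain and $\widetilde{x_n}\neq 0$) gives order-isomorphisms between fractional ideals. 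Concretely, $\widetilde{\J_d} \subseteq \widetilde{\J}$ always holds by \Cref{Jd contained in J}, and the reverse containment $\widetilde{\J} \subseteq \widetilde{\J_d}$ is equivalent, after clearing the common factor $\frac{\widetilde{f}}{\widetilde{x_n}^d}$, to $\widetilde{\K}^{(d)} \subseteq \widetilde{\K}^{d}$; since $\widetilde{\K}^{d}\subseteq \widetilde{\K}^{(d)}$ is automatic, this is exactly the equality $\widetilde{\K}^{d} = \widetilde{\K}^{(d)}$.

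First I would record the elementary lemma that for a domain $A$, a nonzero element $c$ in its fraction field, and two $A$-submodules $M, N$ of the fraction field, one has $cM \subseteq cN$ if and only if $M \subseteq N$; apply this with $A = \R(\n)$, $c = \widetilde{f}/\widetilde{x_n}^d$, $M = \widetilde{\K}^{(d)}$, $N = \widetilde{\K}^{d}$ (or vice versa). Then I would argue the ``$S[T_1,\ldots,T_n]$ vs.\ $\R(\n)$'' passage: we have $\J = \J_d$ in $S[T_1,\ldots,T_n]$ if and only if $\widetilde{\J} = \widetilde{\J_d}$ in $\R(\n)$, because $I_2(\psi) \subseteq \J_d \subseteq \J$ (the first inclusion by definition of $\J_d$, the second by \Cref{Jd contained in J}), so the containment $\J_d \subseteq \J$ of ideals containing $I_2(\psi)$ is an equality precisely when their images modulo $I_2(\psi)$ coincide.

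Putting these together: by \Cref{Jd contained in J} we always have $\widetilde{\J_d}\subseteq\widetilde{\J}$, so $\widetilde{\J}=\widetilde{\J_d}$ is equivalent to $\widetilde{\J}\subseteq\widetilde{\J_d}$, which by the substitutions above and \Cref{J=D}, \Cref{Jd divisorial} reads $\frac{\widetilde{f}\widetilde{\K}^{(d)}}{\widetilde{x_n}^d} \subseteq \frac{\widetilde{f}\widetilde{\K}^{d}}{\widetilde{x_n}^d}$, i.e.\ $\widetilde{\K}^{(d)}\subseteq\widetilde{\K}^{d}$; combined with the trivial $\widetilde{\K}^{d}\subseteq\widetilde{\K}^{(d)}$ this is $\widetilde{\K}^{d}=\widetilde{\K}^{(d)}$. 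I expect the proof to be short and the main subtlety to be purely bookkeeping: making sure the ``clearing denominators'' step is legitimate, i.e.\ that $\widetilde{f}$ and $\widetilde{x_n}^d$ are genuine nonzerodivisors in the domain $\R(\n)$ so that the fractional-ideal manipulations are reversible, and checking that the equality $\J = \J_d$ upstairs really is detected downstairs after quotienting by $I_2(\psi)$ (which is immediate once one notes both ideals contain $I_2(\psi)$). There is no serious analytic or homological obstacle here—the real content was front-loaded into \Cref{J=D} and \Cref{Jd divisorial}, and this corollary is the clean comparison of those two formulas.
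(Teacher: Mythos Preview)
Your proposal is correct and matches the paper's approach exactly: the paper's proof is the single sentence ``This follows from \Cref{Jd contained in J}, \Cref{J=D}, and \Cref{Jd divisorial},'' and you have simply unpacked those references into the obvious comparison of the two fractional-ideal descriptions after noting that $I_2(\psi)\subseteq\J_d\subseteq\J$ lets you pass to $\R(\n)$. The only cosmetic issue is the mid-sentence self-correction about $\widetilde{x_n}$ being a nonzerodivisor; in a final write-up just state directly that $\R(\n)$ is a domain and $\widetilde{f},\,\widetilde{x_n}\neq 0$.
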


\begin{proof}
This follows from \Cref{Jd contained in J}, \Cref{J=D}, and \Cref{Jd divisorial}.
\end{proof}


\section{Defining Ideal}\label{Defining Ideal Section}

In \Cref{Equal criteria}, a condition was given for when the defining ideal $\J$ coincides with the ideal $\J_d$ of the full downgraded sequence from \Cref{algorithm defn}. We begin this section by showing that this condition is satisfied. We then devote the remainder of the section to exploring various properties of $\R(\m)$, such as relation type, Cohen--Macaulayness, and Castelnuovo--Mumford regularity.

\begin{thm}\label{main result}
With the assumptions of \Cref{main setting}, the defining ideal $\J$ of $\R(\m)$, as a quotient of $S[T_1,\ldots,T_n]$, is $\J = I_2(\psi) +(f_0,\ldots,f_d)$.
\end{thm}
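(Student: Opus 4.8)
The plan is to verify the criterion of \Cref{Equal criteria}, namely that $\widetilde{\K}^d = \widetilde{\K}^{(d)}$ in $\R(\n)$. Since $\widetilde{\K}$ is a height-one ideal in a Cohen--Macaulay ring, the ordinary and symbolic powers agree precisely when the ordinary power $\widetilde{\K}^d$ is unmixed, i.e.\ has no embedded or superfluous associated primes of height $>1$; equivalently, $\depth \R(\n)/\widetilde{\K}^d \geq 1$ locally at every associated prime of $\widetilde{\K}^d$, but it will be cleaner to argue that $\widetilde{\K}^d$ is a Cohen--Macaulay ideal of height one outright. The key input is \Cref{K SCM and generically a CI}: $\widetilde{\K}$ is strongly Cohen--Macaulay, generically a complete intersection, of height one, and is minimally generated by two elements $\widetilde{x_n}, \widetilde{T_n}$ (an almost complete intersection). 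For such ideals there are well-developed criteria controlling the Cohen--Macaulayness of all powers — in particular, the theory of Artin--Nagata properties and the results on powers of licci / strongly CM ideals satisfying $G_\infty$ or sliding depth.

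First I would pin down $\widetilde{\K}$ more concretely. Using (\ref{K and matrix}), modulo $I_2(\psi)$ the ideal $\widetilde{\K}$ is the image of $(x_n, T_n)$, and one has a clean presentation: its second symmetric power and Rees algebra are accessible because $\widetilde{\K} = (\widetilde{x_n},\widetilde{T_n})$ with the single nontrivial relation coming from $\widetilde{x_i}\widetilde{T_n} = \widetilde{x_n}\widetilde{T_i}$, which after dividing expresses a syzygy. Concretely I expect $\widetilde{\K}$ to satisfy the condition $G_s$ for the relevant $s$ (its Fitting ideals have the expected heights, since away from $\widetilde{\n}$ it is a complete intersection and $\widetilde{\n}$ has height one), and combined with strong Cohen--Macaulayness this should force $\R(\n)/\widetilde{\K}^j$ to be Cohen--Macaulay for all $j$ — this is exactly the kind of conclusion furnished by the work of Huneke and of Ulrich on powers of ideals in the linkage class of a complete intersection. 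Once $\widetilde{\K}^d$ is Cohen--Macaulay of height one it is unmixed, so $\widetilde{\K}^d = \widetilde{\K}^{(d)}$, and \Cref{Equal criteria} immediately gives $\J = \J_d = I_2(\psi) + (f_0,\ldots,f_d)$ by \Cref{fi bideg remark}.

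An alternative, possibly more self-contained route avoids invoking heavy machinery: work locally. By \Cref{linkage prop}(a)--(c), $\widetilde{\n}^{(i)} = \widetilde{\n}^i$ and $\widetilde{\K}^{(i)}$ is linked to $\widetilde{\n}^i$ through $(\widetilde{x_n}^i)$. So it suffices to show $\widetilde{\K}^d$ is already unmixed, and the only candidate for an extra associated prime is $\widetilde{\n}$ itself (since away from $\widetilde{\n}$ the ideal $\widetilde{\K}$ is a complete intersection, hence all its powers are unmixed there). Thus I must check that $\widetilde{\n}$ is not an associated prime of $\R(\n)/\widetilde{\K}^d$, equivalently that $\depth (\R(\n)/\widetilde{\K}^d)_{\widetilde{\n}} \geq 1$. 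After localizing at $\widetilde{\n}$ one is in a regular local ring (or at least a nice local ring) of small dimension where $\widetilde{\K}_{\widetilde{\n}}$ is a two-generated ideal, and one can compute: the associated graded ring or the Rees algebra of $\widetilde{\K}_{\widetilde{\n}}$ is Cohen--Macaulay because $\widetilde{\K}$ is strongly CM and $\ell(\widetilde{\K}_{\widetilde{\n}}) \le 2$, forcing $\widetilde{\K}^d_{\widetilde{\n}}$ to have positive depth.

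The main obstacle, as I see it, is establishing Cohen--Macaulayness of the power $\widetilde{\K}^d$ (equivalently the unmixedness at $\widetilde{\n}$) — bounding the depth of high powers of an ideal is generically hard, and here it hinges on exploiting that $\widetilde{\K}$ is an almost complete intersection which is strongly Cohen--Macaulay and generically a complete intersection, a configuration rigid enough that its Rees algebra and all its powers inherit Cohen--Macaulayness. I expect the cleanest argument to go through the explicit structure of $\widetilde{\K} = (\widetilde{x_n}, \widetilde{T_n})$ in $\R(\n)$, analyze its Rees algebra $\R(\n)[\widetilde{x_n}u, \widetilde{T_n}u]$ directly (it should itself be a nice determinantal-type ring), deduce that $\widetilde{\K}^j = \widetilde{\K}^{(j)}$ for all $j$, and then conclude via \Cref{Equal criteria}.
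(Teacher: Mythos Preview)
Your high-level strategy matches the paper's exactly: verify the criterion of \Cref{Equal criteria} by showing $\widetilde{\K}^d = \widetilde{\K}^{(d)}$, using that $\widetilde{\K}$ is strongly Cohen--Macaulay and generically a complete intersection. However, the execution contains a genuine gap and a misstep.

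The main gap is your plan to ``argue that $\widetilde{\K}^d$ is a Cohen--Macaulay ideal of height one outright'' and that strong Cohen--Macaulayness ``should force $\R(\n)/\widetilde{\K}^j$ to be Cohen--Macaulay for all $j$.'' This is false in general: since $\widetilde{\J}\cong\widetilde{\K}^d$, \Cref{Cohen--Macaulayness of R(m)} shows that $\R(\n)/\widetilde{\K}^d$ is Cohen--Macaulay precisely when $d\leq n-1$. For $d\geq n$ the power $\widetilde{\K}^d$ is \emph{not} Cohen--Macaulay, so any argument that tries to prove unmixedness via Cohen--Macaulayness of the power is doomed. What you need is the weaker statement $\widetilde{\K}^d=\widetilde{\K}^{(d)}$, and the paper obtains this directly from \cite[3.4]{SV81}: for a strongly Cohen--Macaulay, generically complete intersection ideal, it suffices to check that $\widetilde{\K}_\p$ is principal at every height-two prime $\p$. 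The paper then does this by an elementary case split---if $\p\nsupseteq\widetilde{\n}$ then $\widetilde{\K}_\p=(\widetilde{x_n})_\p$, while if $\p\supseteq\widetilde{\n}$ then some $\widetilde{T_i}\notin\p$ (else $\hgt\p\geq n+1$), and the relation $\widetilde{x_i}\widetilde{T_n}=\widetilde{x_n}\widetilde{T_i}$ gives $\widetilde{\K}_\p=(\widetilde{T_n})_\p$.

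Your alternative local route also goes astray: you propose to check that $\widetilde{\n}$ is not an associated prime of $\R(\n)/\widetilde{\K}^d$, but $\widetilde{\K}\nsubseteq\widetilde{\n}$ (since $\widetilde{T_n}\notin\widetilde{\n}$), so $\widetilde{\n}$ cannot be associated to $\widetilde{\K}^d$ at all, and $\widetilde{\K}_{\widetilde{\n}}$ is already the unit ideal rather than a two-generated ideal. The correct localization is at height-two primes containing $\widetilde{\K}$; once you see that $\widetilde{\K}$ is principal there, the argument finishes, and this is exactly the computation the paper supplies.
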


\begin{proof}
As noted in \Cref{Equal criteria}, it suffices to show that $\widetilde{\K}^{d} = \widetilde{\K}^{(d)}$. Recall from \Cref{K SCM and generically a CI} that $\widetilde{\K}=(\widetilde{x_n},\widetilde{T_n})$ is a strongly Cohen--Macaulay ideal of height one, and is also generically a complete intersection. Thus by \cite[3.4]{SV81}, it suffices to show that $\widetilde{\K}_\p$ is a principal ideal, for any prime $\R(\n)$-ideal $\p$ with $\hgt \p =2$.

First note that if $\p \nsupseteq \widetilde{\n}$, the proof of \Cref{J=D} shows that $\widetilde{\K}_\p = (\widetilde{x_n})_\p$. Hence we may assume that $\p \supseteq \widetilde{\n}$. In this case, there must exist a $\widetilde{T_i}$ such that $\widetilde{T_i} \notin \p$. Indeed, otherwise there is a containment $(x_1,\ldots, x_n,T_1,\ldots,T_n)^{\widetilde{\,\,}} \subseteq \p$. However, this is impossible as the former ideal has height $2n-(n-1) = n+1 \geq 3$ (recall that $n\geq 2$) and $\hgt \p =2$. For any such $\widetilde{T_i}\notin \p$, we note that $\widetilde{T_i}$ is then a unit locally at $\p$. Thus from the equality $\widetilde{x_i}\widetilde{T_n} = \widetilde{x_n}\widetilde{T_i}$ modulo $I_2(\psi)$, we have $\widetilde{\K}_\p = (\widetilde{x_n},\widetilde{T_n})_\p = (\widetilde{T_n})_\p$, and the claim follows.
\end{proof}

\begin{prop}\label{min gen set}
If $d\geq 2$, the ideal $\J$ is minimally generated as $\J=I_2(\psi)+(f_0,\ldots,f_d)$. The defining ideal of $\R(\m)$, as a quotient of $R[T_1,\ldots,T_n]$ is then minimally generated as $\overline{\J}=\overline{I_2(\psi)+(f_1,\ldots,f_d)}$.
\end{prop}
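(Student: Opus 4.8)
The plan is to show that the given generating set $I_2(\psi) \cup \{f_0,\ldots,f_d\}$ of $\J$ cannot be shortened, and then to deduce the statement over $R[T_1,\ldots,T_n]$. The key tool is a degree (bidegree) count: the generators of $I_2(\psi)$ all sit in bidegree $(1,1)$, while by \Cref{fi bideg remark} the downgraded polynomial $f_i$ is bihomogeneous of bidegree $(d-i,i)$. Since $d\geq 2$, none of the $f_i$ for $0\leq i\leq d$ has bidegree $(1,1)$ — indeed $f_0$ has bidegree $(d,0)$, $f_d$ has bidegree $(0,d)$, and the intermediate $f_i$ have both components positive but not both equal to $1$ unless $d=2$ and $i=1$, which is the one case that needs a tiny separate check (there $f_1$ has bidegree $(1,1)$, and one must verify $f_1 \notin I_2(\psi) + (f_0,f_2)$ directly). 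So the strategy is: first argue that minimality can be checked one bidegree at a time, since $\J$ is a bihomogeneous ideal in the standard bigraded polynomial ring $B=S[T_1,\ldots,T_n]$ and minimal generators are detected in $B/\mathfrak{M}_B B$ where $\mathfrak{M}_B$ is the bigraded maximal ideal; second, observe that in bidegree $(1,1)$ the only candidate generators among our list are the $2\times 2$ minors of $\psi$, and these are already a minimal generating set for that strand of $\J$ (this follows because $\J$ agrees with $I_2(\psi)$ in bidegrees $(1,1)$ — the $f_i$ contribute nothing there when $d\geq 3$, and the resolution in \Cref{resolution prop} pins down the first-syzygy structure); third, in each bidegree $(d-i,i)$ with $i$ fixed, argue that $f_i$ is a genuine minimal generator, i.e. $f_i \notin I_2(\psi) + (f_0,\ldots,\widehat{f_i},\ldots,f_d)$.

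For the third point, the cleanest route is to localize and use the results already in hand. Recall $\widetilde{\J_d}=\frac{\widetilde{f}\widetilde{\K}^{d}}{\widetilde{x_n}^d}$ by \Cref{Jd divisorial}, and $\widetilde{\K}=(\widetilde{x_n},\widetilde{T_n})$. Passing to $\R(\n)$, the images $\widetilde{f_0},\ldots,\widetilde{f_d}$ correspond to $\widetilde{f}\,\widetilde{x_n}^{d-i}\widetilde{T_n}^i/\widetilde{x_n}^d$; these are minimal generators of the ideal $\widetilde{f}\widetilde{\K}^d/\widetilde{x_n}^d \cong \widetilde{\K}^d$ because $\widetilde{\K}^d$ is minimally generated by the $d+1$ monomials $\widetilde{x_n}^{d-i}\widetilde{T_n}^i$ in the almost complete intersection $(\widetilde{x_n},\widetilde{T_n})$ — here I would use that $\widetilde{\K}$ is a Cohen--Macaulay height-one ideal that is not principal (it genuinely needs two generators $\widetilde{x_n},\widetilde{T_n}$, as $\widetilde{T_n}\notin\widetilde{\n}$ while $\widetilde{x_n}\in\widetilde{\n}$, so they are not associates), whence its $d$-th power needs exactly $d+1$ generators and no fewer. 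Thus none of the $\widetilde{f_i}$ is redundant modulo $I_2(\psi)$, which is to say $f_i \notin I_2(\psi) + (f_0,\ldots,\widehat{f_i},\ldots,f_d)$ for each $i$. Combined with the bidegree bookkeeping that keeps the $\psi$-minors and the $f_i$ in separate strands (for $d\geq 3$) and with the direct check when $d=2$, this shows the generating set is minimal.

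The main obstacle I anticipate is the interaction between the two families of generators in the single overlapping bidegree: when $d=2$, the generator $f_1$ has bidegree $(1,1)$, the same as the minors of $\psi$, so the clean ``separate strands'' argument fails there and one must verify by hand (or by the $\R(\n)$-localization argument above, which does not care about bidegrees) that $f_1$ is not in the ideal generated by $I_2(\psi)$, $f_0$, and $f_2$ — equivalently that $\widetilde{f_1}$ is a minimal generator of $\widetilde{\K}^2$, which it is. Given this, the $R[T_1,\ldots,T_n]$ statement is then immediate: reducing modulo $(f)=(f_0)$ kills exactly the generator $f_0$ (it becomes $0$), and no new relations among the remaining generators are introduced because $f_0=f$ is a nonzerodivisor on $B$ and on $B/I_2(\psi)$ (the latter by \Cref{resolution prop}, or by primality of $I_2(\psi)$), so $\overline{\J}=\overline{I_2(\psi)+(f_1,\ldots,f_d)}$ and this remains a minimal generating set over $R[T_1,\ldots,T_n]$. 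A brief remark should also note why $d\geq 2$ is needed: when $d=1$, $f_0=f$ is linear in the $x_i$, $f_1$ lands among the minors' bidegree, and in fact $R$ is regular so $\J=I_2(\psi)$ already, and the statement as phrased would be vacuous or false.
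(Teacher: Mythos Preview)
Your overall strategy is sound and genuinely different from the paper's. The paper argues entirely by bidegree propagation: it shows directly that if some $f_j$ (or, in the $d=2$ case, some minor $\ell_i$) were redundant, bidegree considerations would force $f_j\in I_2(\psi)$, and then the recursive construction of the downgraded sequence would make a later $f_i$ vanish, contradicting \Cref{fi bideg remark}. You instead push the problem down to $\R(\n)$ via the isomorphism $\widetilde{\J}\cong\widetilde{\K}^d$ from \Cref{Jd divisorial} and \Cref{main result}, reducing minimality of the $f_i$ to minimal generation of $\widetilde{\K}^d$. That is a nice structural shortcut and legitimately avoids the recursion.

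However, your justification that $\mu(\widetilde{\K}^d)=d+1$ has a real gap. The implication ``height-one Cohen--Macaulay, not principal, hence the $d$th power needs $d+1$ generators'' is false in general: take $R=k[[t^2,t^3]]$ and $I=\m=(t^2,t^3)$, which is height-one, Cohen--Macaulay, and non-principal, yet $\m^2=(t^4,t^5)$ is only $2$-generated. What makes your claim true here is the bigrading (or equivalently that $\ell(\widetilde{\K})=2$): the monomials $\widetilde{x_n}^{\,d-i}\widetilde{T_n}^{\,i}$ are nonzero of pairwise distinct bidegrees $(d-i,i)$, each of minimal total degree $d$ in $\widetilde{\K}^d$, so their images in $\widetilde{\K}^d/\mathfrak{M}\widetilde{\K}^d$ are $k$-linearly independent. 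Once you insert this argument, your route goes through.

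Two smaller points. First, in the $d=2$ case you address only why $f_1$ is not redundant; you should also say why no minor $\ell_i$ becomes redundant. This follows immediately from what you have: a bidegree-$(1,1)$ redundancy of $\ell_i$ would be a $k$-linear relation among the $\ell_j$ and $f_1$, forcing either $\ell_i\in(\ell_j:j\neq i)$ (impossible for a generic matrix) or $f_1\in I_2(\psi)$, i.e.\ $\widetilde{f_1}=0$, contradicting your minimal generation of $\widetilde{\K}^2$. Second, for the passage to $R[T_1,\ldots,T_n]$, the ``nonzerodivisor'' remark is beside the point; the correct (and simpler) reason, which the paper uses, is that once $f_0=f$ is part of a minimal generating set of $\J$, the images of the remaining generators minimally generate $\overline{\J}=\J/(f_0)$.
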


\begin{proof}
It suffices to prove the first statement, as the second follows immediately as $f$ is then part of a minimal generating set of $\J$. Writing $\ell_1,\ldots,\ell_{\scalebox{0.6}{$\binom{n}{2}$}}$ to denote the minors of $\psi$, we note that these polynomials have bidegree $(1,1)$. Moreover, recall from \Cref{fi bideg remark} that $\bideg f_i = (d-i,i)$ for $0\leq i\leq d$. With this, we show that $\{\ell_1,\ldots,\ell_{\scalebox{0.6}{$\binom{n}{2}$}}, f_0,\ldots,f_d\}$ is a minimal generating set of $\J$, from bidegree considerations.

We first show that it is impossible for any $\ell_i$ to be a non-minimal generator. Without loss of generality, suppose that $\ell_1$ is non-minimal and can hence be written in terms of the remaining generators. If $d\geq 3$, then from bidegree considerations and noting that either $d-i>1$ or $i> 1$, it follows that $\ell_1 \in (\ell_2,\ldots,\ell_{\scalebox{0.6}{$\binom{n}{2}$}})$, which is impossible by \cite[A2.12]{Eisenbud}. If $d=2$, then noting that $\bideg f_1 = (1,1)$ in this case and considering all other bidegrees, it follows that $\ell_1$ may be written in terms of $\{\ell_2,\ldots,\ell_{\scalebox{0.6}{$\binom{n}{2}$}}, f_1\}$. 
Since each polynomial here has bidegree $(1,1)$, such an expression could be rearranged to show that $f_1\in I_2(\psi)$. However, this is impossible as then $\partial f_1$ is a syzygy on $T_1,\ldots,T_n$ and so $f_2=f_d =0$ which is a contradiction to \Cref{fi bideg remark}.

Now that $\ell_1,\ldots,\ell_{\scalebox{0.6}{$\binom{n}{2}$}}$ have been shown to be minimal generators of $\J$, we show the same for $f_0,\ldots,f_d$. Note that $\bideg f_0 = (d,0)$ and $\bideg f_d=(0,d)$, hence from bidegree considerations it is clear that these polynomials are minimal generators of $\J$. Now suppose that some $f_j$, for $1\leq j\leq d-1$, is non-minimal and can be written in terms of the remaining generators. However, with \Cref{fi bideg remark} and considering both components in the bigrading it follows that $f_j\in I_2(\psi)$. As before, this is impossible as $j\leq d-1$ and the subsequent equations would then vanish.
\end{proof}

\begin{rem}\label{d=1 case}
Although the case $d=1$ is excluded in \Cref{min gen set}, this setting is of little concern. Indeed, after a linear change of coordinates, we may take $f=x_n$, and so $R\cong k[x_1,\ldots,x_{n-1}]$ with $\m=(x_1,\ldots,x_{n-1})$. Moreover, we then have $f_1 =T_n$, and so $\J = \K = \L$. With this and \Cref{main result}, we see that
$$\R(\m) \cong \frac{S[T_1,\ldots,T_n]}{I_2(\psi) + (x_n,T_n)} \cong \frac{R[T_1,\ldots,T_{n-1}]}{I_2(\psi')}$$
where 
$$\psi' =\begin{bmatrix}
   x_1&\ldots &x_{n-1}\\
    T_1&\ldots& T_{n-1}
\end{bmatrix}$$
as in the description of $\K$ in (\ref{K and matrix}). In particular, the description of $\R(\m)$ in \Cref{main result} recovers the original result of Micali \cite[Chap. I: Thm. 1, Lem. 2]{Micali64} in this case.   
\end{rem}

Recall that the \textit{relation type} $\rt(\m)$ is the largest degree, with respect to $T_1,\ldots,T_n$, within a minimal generating set of the defining ideal of $\R(\m)$.

\begin{cor}\label{rel type cor}
With the assumptions of \Cref{main setting}, the relation type of $\m$ is $\rt(\m) =d$.
\end{cor}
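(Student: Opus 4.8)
The plan is to read off the relation type directly from the minimal generating set of the defining ideal established in \Cref{main result} and \Cref{min gen set}. Recall that $\rt(\m)$ is the largest $T$-degree appearing among the minimal generators of $\overline{\J}$, the defining ideal of $\R(\m)$ as a quotient of $R[T_1,\ldots,T_n]$. By \Cref{min gen set}, when $d\geq 2$ a minimal generating set of $\overline{\J}$ is $\overline{\{\ell_1,\ldots,\ell_{\scalebox{0.6}{$\binom{n}{2}$}},f_1,\ldots,f_d\}}$, and by \Cref{fi bideg remark} we have $\bideg f_i=(d-i,i)$, so the $T$-degree of $f_i$ is exactly $i$; the minors $\ell_j$ have $T$-degree $1$. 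Hence the largest $T$-degree among these generators is $\max\{1,1,2,\ldots,d\}=d$, giving $\rt(\m)=d$.

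First I would dispense with the case $d=1$ separately, using \Cref{d=1 case}: there $\R(\m)\cong R[T_1,\ldots,T_{n-1}]/I_2(\psi')$, whose defining ideal is generated by $2\times 2$ minors, all of $T$-degree $1$, so $\rt(\m)=1=d$. (One should note the defining ideal is nonzero here, which holds since $n\geq 2$ forces $\psi'$ to have at least one minor.) Then for $d\geq 2$ I would invoke \Cref{min gen set} to get the explicit minimal generating set, observe via \Cref{fi bideg remark} that $f_d$ contributes a minimal generator of $T$-degree $d$ while every other minimal generator has $T$-degree at most $d$, and conclude $\rt(\m)=d$.

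I do not anticipate a genuine obstacle here, since all the work has been done: the only subtlety is that relation type is defined in terms of a \emph{minimal} generating set of the defining ideal, so it is essential that \Cref{min gen set} guarantees no smaller generating set exists with all generators in lower $T$-degree — in particular that $f_d$ cannot be eliminated. This is precisely what \Cref{min gen set} provides, as $f_d$ is the unique generator of bidegree $(0,d)$ and hence cannot be written in terms of the others. Thus the proof is a short bookkeeping argument citing \Cref{main result}, \Cref{min gen set}, \Cref{fi bideg remark}, and \Cref{d=1 case}.
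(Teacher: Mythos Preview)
Your proof is correct and follows the same approach as the paper: invoke \Cref{min gen set} for $d\geq 2$ and handle $d=1$ separately via \Cref{d=1 case}. One small inaccuracy in your parenthetical: when $n=2$ the matrix $\psi'$ has only one column and hence no $2\times 2$ minors, but this is harmless since $\m$ is then principal (generated by a nonzerodivisor) and of linear type, so $\rt(\m)=1$ regardless.
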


\begin{proof}
    If $d\geq 2$, this follows from \Cref{min gen set}. If $d=1$ then, as noted in \Cref{d=1 case}, $\m$ is of linear type and so $\rt(\m)=1$.
\end{proof}

\begin{cor}\label{fiber ring cor}
With the assumptions of \Cref{main setting}, the fiber ring of $\m$ is $\F(\m) \cong k[T_1,\ldots,T_n]/(f_d)$. Moreover, $\F(\m)$ is Cohen--Macaulay.
 \end{cor}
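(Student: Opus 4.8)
The plan is to extract the fiber ring directly from the presentation of $\R(\m)$ obtained in \Cref{main result}. Recall that $\F(\m)=\R(\m)/\m\R(\m)$. Since $\R(\m)\cong S[T_1,\ldots,T_n]/\J$ via the composition (\ref{Composition}), and $\m=\overline{\n}$, the preimage of $\m\R(\m)$ in $S[T_1,\ldots,T_n]$ is $\n S[T_1,\ldots,T_n]+\J$. Hence
\[
\F(\m)\;\cong\; S[T_1,\ldots,T_n]\big/\big(\n S[T_1,\ldots,T_n]+\J\big)\;\cong\; k[T_1,\ldots,T_n]\big/\J',
\]
where $\J'$ is the image of $\J$ under the surjection $\pi\colon S[T_1,\ldots,T_n]\twoheadrightarrow k[T_1,\ldots,T_n]$ killing the $x_i$ and fixing the $T_i$, so $\ker\pi=\n S[T_1,\ldots,T_n]$. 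Everything then comes down to computing $\pi(\J)$ from the generators $I_2(\psi)+(f_0,\ldots,f_d)$ furnished by \Cref{main result}.

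Here the bidegree bookkeeping of \Cref{fi bideg remark} does all the work. Every $2\times 2$ minor of $\psi$ has the form $x_iT_j-x_jT_i$ and so lies in $\ker\pi$, giving $\pi(I_2(\psi))=0$. Each $f_i$ is bihomogeneous of bidegree $(d-i,i)$, so for $0\leq i\leq d-1$ every monomial of $f_i$ has positive $x$-degree, whence $f_i\in\n S[T_1,\ldots,T_n]=\ker\pi$, while $f_d$ has bidegree $(0,d)$, i.e.\ $f_d\in k[T_1,\ldots,T_n]$, and is fixed by $\pi$. Therefore $\J'=(f_d)$ and $\F(\m)\cong k[T_1,\ldots,T_n]/(f_d)$. (Alternatively one could run the same computation through the isomorphism $\F(\m)\cong\R(\n)/(\widetilde{\J}+\widetilde{\n})$ noted in the proof of \Cref{J=D}, combined with $\widetilde{\J}=\widetilde{\J_d}$ from \Cref{main result} and $\R(\n)/\widetilde{\n}\cong k[T_1,\ldots,T_n]$.)

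For the Cohen--Macaulay assertion the key input is that $f_d\neq 0$, which is precisely part of \Cref{fi bideg remark}. Consequently $\F(\m)$ is the quotient of a polynomial ring by a single nonzero homogeneous element — a hypersurface ring, hence a codimension-one complete intersection — and is therefore Cohen--Macaulay.

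There is no serious obstacle once \Cref{main result} is in hand: the only care needed is in correctly identifying $\F(\m)=\R(\m)/\m\R(\m)$ and checking that $\m\R(\m)$ pulls back to $\n S[T_1,\ldots,T_n]+\J$, after which the shape of $\J'$ is forced by bidegrees; the non-vanishing of $f_d$ required for Cohen--Macaulayness has already been secured in \Cref{fi bideg remark}.
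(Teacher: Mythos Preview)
Your proof is correct and follows essentially the same approach as the paper: both compute $\F(\m)$ as $S[T_1,\ldots,T_n]/(\J+\n)$, invoke \Cref{main result} for the generators of $\J$, use the bidegree information from \Cref{fi bideg remark} to see that only $f_d$ survives modulo $\n$, and conclude Cohen--Macaulayness from the hypersurface description. Your write-up is slightly more explicit about the surjection $\pi$ and the pullback of $\m\R(\m)$, but the argument is the same.
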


\begin{proof}
From \Cref{main result}, we have $\J = I_2(\psi) +(f_0,\ldots,f_d)$. As noted in the proof of \Cref{min gen set}, $I_2(\psi)$ is generated in bidegree $(1,1)$ and each $f_i$ is nonzero of bidegree $(d-i,i)$ by \Cref{fi bideg remark}. Hence from bidegree considerations, we have $\J + \n =(f_d) +\n$, and so $\F(\m) \cong S[T_1,\ldots,T_n]/(\J+\n) \cong k[T_1,\ldots,T_n]/(f_d)$. The Cohen--Macaulayness of $\F(\m)$ is clear, as it is a hypersurface ring.
\end{proof}

\subsection{Cohen--Macaulayness}

We now discuss the Cohen--Macaulayness of $\R(\m)$, and begin by constructing a handful of short exact sequences, similar to the procedure in \cite{BM16}. As depth is easily compared along short exact sequences, we are able to bound the depths of all modules involved.

With $\n =(x_1,\ldots,x_n)$ as before, recall that $\widetilde{\K} = (\widetilde{x_n},\widetilde{T_n})$ and this is a strongly Cohen--Macaulay ideal with $\widetilde{\K} = (\widetilde{x_n}):\widetilde{\n}$ by \Cref{K SCM and generically a CI} and \Cref{linkage prop}.
As such, there is a short exact sequence of bigraded $\R(\n)$-modules
\[
0\longrightarrow \n \R(\n)(0,-1) \longrightarrow \R(\n)(-1,0)\oplus \R(\n)(0,-1)\longrightarrow \widetilde{\K} \longrightarrow 0.
\]
Now applying the symmetric algebra functor $\S(-)$ to this sequence and taking the $d$th graded strand, we obtain
\[
\n\R(\n) (0,-1) \otimes \S_{d-1}\big(\R(\n)(-1,0)\oplus \R(\n)(0,-1)\big) \rightarrow \S_d\big(\R(\n)(-1,0)\oplus \R(\n)(0,-1)\big)\rightarrow \S_d(\widetilde{\K}) \rightarrow 0.
\]

As $\widetilde{\K}=(\widetilde{x_n},\widetilde{T_n})$ is a strongly Cohen--Macaulay $\R(\n)$-ideal and is 2-generated, it follows that $\widetilde{\K}$ is of linear type \cite[2.6]{HSV82}, hence $\S_d(\widetilde{\K}) = \widetilde{\K}^d$. Additionally, by rank considerations, the kernel of the first map above is torsion. As it is a submodule of a torsion-free module, this kernel must vanish, and so we have a short exact sequence
\[
0\rightarrow\n\R(\n) (0,-1) \otimes \S_{d-1}\big(\R(\n)(-1,0)\oplus \R(\n)(0,-1)\big) \rightarrow \S_d\big(\R(\n)(-1,0)\oplus \R(\n)(0,-1)\big)\rightarrow \widetilde{\K}^d \rightarrow 0.
\]
Now writing the symmetric powers above as free $\R(\n)$-modules, we then have
\begin{equation}\label{direct sum SES}
0\longrightarrow \bigoplus_{i=0}^{d-1} \n \R(\n)\big(-i,-(d-i)\big)\longrightarrow \bigoplus_{i=0}^{d} \R(\n)\big(-i,-(d-i)\big)
\longrightarrow \widetilde{\K}^d \longrightarrow 0.
\end{equation}

Additionally, we have the following natural short exact sequences
\begin{equation}\label{fiber ring SES}
0\longrightarrow\n \R(\n) \longrightarrow \R(\n) \longrightarrow \F(\n) \cong k[T_1,\ldots,T_n]\longrightarrow 0
\end{equation}
and 
\begin{equation}\label{Jtilde SES}
0\longrightarrow\widetilde{\J} \longrightarrow \R(\n) \longrightarrow \R(\m) \longrightarrow 0.
\end{equation}

With the short exact sequences (\ref{direct sum SES}) -- (\ref{Jtilde SES}), we may apply \cite[18.6]{Eisenbud} multiple times to compare the depths of the modules and rings above. Recall that a Noetherian local ring $A$ is said to be \textit{almost} Cohen--Macaulay if $\dim A -\depth A \leq 1$. In particular, we note that Cohen--Macaulay implies almost Cohen--Macaulay.

\begin{thm}\label{Cohen--Macaulayness of R(m)}
With the assumptions of \Cref{main setting}, $\R(\m)$ is almost Cohen--Macaulay, and is Cohen--Macaulay if and only if $d\leq n-1$. 
\end{thm}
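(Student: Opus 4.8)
The plan is to extract depth information from the short exact sequences (\ref{direct sum SES})--(\ref{Jtilde SES}) by repeated application of the depth lemma, starting from facts about $\R(\n)$ and $k[T_1,\ldots,T_n]$ that are already in hand. First I would record that $\R(\n)$ is Cohen--Macaulay of dimension $n+1$ (by \Cref{K CM ht 1 prop}), that $\F(\n)\cong k[T_1,\ldots,T_n]$ is Cohen--Macaulay of dimension $n$, and hence from (\ref{fiber ring SES}) that $\depth \n\R(\n) = n$ (the map $\n\R(\n)\to\R(\n)$ is injective, both outer terms have depth $\geq n$, so the depth lemma forces $\depth \n\R(\n)\geq n$; equality holds since $\dim \n\R(\n) = n+1$ would make $\n\R(\n)$ a CM ideal, but one can also just take $n$ as a lower bound, which is all that is needed). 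Next, from the direct-sum sequence (\ref{direct sum SES}), whose left term is a direct sum of shifted copies of $\n\R(\n)$ (depth $\geq n$) and whose middle term is free over $\R(\n)$ (depth $n+1$), the depth lemma yields $\depth \widetilde{\K}^d \geq n$. Since $\widetilde{\K}^d$ has dimension $n+1$ (it is a nonzero ideal in the domain $\R(\n)$), this says $\widetilde{\K}^d$ is almost Cohen--Macaulay.

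Then I would turn to $\widetilde{\J}$. By \Cref{main result} and \Cref{Jd divisorial} we have $\widetilde{\J} = \widetilde{\J_d} = \widetilde{f}\widetilde{\K}^d/\widetilde{x_n}^d \cong \widetilde{\K}^d$ as $\R(\n)$-modules (the isomorphism being multiplication by $\widetilde{f}/\widetilde{x_n}^d$ in the fraction field, which is injective since $\R(\n)$ is a domain). Hence $\depth \widetilde{\J} \geq n$. Now feed this into (\ref{Jtilde SES}): $\widetilde{\J}$ has depth $\geq n$, $\R(\n)$ has depth $n+1$, so $\depth \R(\m) \geq n-1$. Since $\dim \R(\m) = n$ (noted after \Cref{main setting}, using $n\geq 2$), this gives $\dim\R(\m) - \depth \R(\m) \leq 1$, i.e. $\R(\m)$ is almost Cohen--Macaulay, proving the first assertion.

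For the equivalence with $d\leq n-1$, I would sharpen the depth count in both directions. For the "if" direction, the key is to pin down $\depth \widetilde{\K}^d$ exactly, or at least to show $\depth \widetilde{\K}^d = n+1$ when $d\leq n-1$: here I expect to use the Eagon--Northcott/approximation-complex machinery referenced in the remark after \Cref{K CM ht 1 prop} (i.e. \cite[A2.13, A2.14]{Eisenbud}) to control the depth of the powers $\widetilde{\K}^d$ of the strongly Cohen--Macaulay, generically complete-intersection, two-generated ideal $\widetilde{\K}$; the relevant bound on $\depth \widetilde{\K}^d$ degrades as $d$ grows, and the threshold $d = n-1$ should fall out of the arithmetic ($\dim\R(\n) = n+1$, $\widetilde{\K}$ has one "extra" generator beyond its height). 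Once $\depth\widetilde{\K}^d = n+1 = \depth\R(\n)$ is known, (\ref{Jtilde SES}) gives $\depth\R(\m)\geq n$, hence $= n = \dim\R(\m)$, so $\R(\m)$ is Cohen--Macaulay. For the "only if" direction, I would argue contrapositively: when $d\geq n$ the same machinery shows $\depth\widetilde{\K}^d = n$ exactly (not $n+1$), and then I must show this forces $\depth\R(\m) = n-1$ rather than $n$; this is the delicate point, since the depth lemma applied to (\ref{Jtilde SES}) only gives the inequality $\depth\R(\m)\geq n-1$ for free. To get equality I would look at the connecting behavior in local cohomology: $H^{n-1}_{\mathfrak{M}}(\R(\m))$ receives a map from $H^n_{\mathfrak{M}}(\widetilde{\J}) \cong H^n_{\mathfrak{M}}(\widetilde{\K}^d)$, which is nonzero when $\depth\widetilde{\K}^d = n$, and $H^n_{\mathfrak{M}}(\R(\n)) = 0$ since $\R(\n)$ is CM of dimension $n+1$; so the long exact sequence forces $H^{n-1}_{\mathfrak{M}}(\R(\m))\neq 0$, i.e. $\depth\R(\m)\leq n-1$.

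The main obstacle is the precise computation of $\depth\widetilde{\K}^d$ as a function of $d$ — establishing that it equals $n+1$ exactly when $d\leq n-1$ and equals $n$ exactly when $d\geq n$. Everything else is bookkeeping with the depth lemma and the dimension count $\dim\R(\m) = n$, but this step is where the numerical threshold is actually produced, and it requires carefully invoking the resolution of powers of an almost complete intersection that is strongly Cohen--Macaulay and generically a complete intersection (the approximation complex / Eagon--Northcott-type resolution), together with its depth sensitivity.
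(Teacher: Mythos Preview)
Your overall architecture matches the paper's: chase depth through the sequences (\ref{direct sum SES})--(\ref{Jtilde SES}), use the isomorphism $\widetilde{\J}\cong\widetilde{\K}^d$, and for the ``if'' direction invoke \cite[A2.13, A2.14]{Eisenbud} to see that $\widetilde{\K}^d$ is maximal Cohen--Macaulay for $d\leq n-1$. Two points deserve comment.

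First, there is a slip in the depth count for $\n\R(\n)$. From (\ref{fiber ring SES}) with $\depth\R(\n)=n+1$ and $\depth\F(\n)=n$, the depth lemma gives $\depth\n\R(\n)\geq\min\{n+1,\,n+1\}=n+1$, not merely $n$; the paper uses exactly this. The distinction matters: with only $\depth\n\R(\n)\geq n$, sequence (\ref{direct sum SES}) yields $\depth\widetilde{\K}^d\geq\min\{n-1,\,n+1\}=n-1$, and then (\ref{Jtilde SES}) gives only $\depth\R(\m)\geq n-2$, which is not enough for almost Cohen--Macaulayness. So the lower bound $n$ is \emph{not} ``all that is needed''; you really do need $n+1$, and fortunately the depth lemma hands it to you.

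Second, for the ``only if'' direction the paper takes a much shorter route than the one you outline. Rather than computing $\depth\widetilde{\K}^d$ exactly for $d\geq n$ and running the local cohomology long exact sequence, the paper simply invokes \cite[6.3]{AHT95}: if $\R(\m)$ is Cohen--Macaulay then $\rt(\m)\leq\dim\F(\m)$. Since $\rt(\m)=d$ (\Cref{rel type cor}) and $\dim\F(\m)=n-1$ (\Cref{fiber ring cor}), this gives $d\leq n-1$ in one line. Your proposed argument is valid in outline (indeed the long exact sequence from (\ref{Jtilde SES}) gives $H^{n-1}_{\mathfrak{M}}(\R(\m))\cong H^n_{\mathfrak{M}}(\widetilde{\K}^d)$ since $\R(\n)$ is Cohen--Macaulay of dimension $n+1$), but the obstacle you flag is real: you would still need to prove that $\widetilde{\K}^d$ fails to be maximal Cohen--Macaulay once $d\geq n$, and the references \cite[A2.13, A2.14]{Eisenbud} only give the positive direction. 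The relation-type bound sidesteps this entirely.
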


\begin{proof}
We begin by showing that $\R(\m)$ is almost Cohen--Macaulay, i.e. that $\depth \R(\m) \geq n-1$. Recall that $\R(\n)$ is a Cohen--Macaulay domain with $\dim \R(\n) = n+1$. Thus from (\ref{fiber ring SES}), it follows that $\depth \n\R(\n) \geq n+1$, hence $\depth \n\R(\n) = n+1$ as this is the maximum. Now with this, and comparing depths in (\ref{direct sum SES}), it then follows that $\depth \widetilde{\K}^d \geq n$. With this and the bigraded isomorphism $\widetilde{\J} =\frac{\widetilde{f}\widetilde{\K}^{d} }{\widetilde{x_n}^d} \cong\widetilde{\K}^{d}$, comparing depths along (\ref{Jtilde SES}) shows that $\depth \R(\m) \geq n-1$, as claimed. 

For the second assertion, if $\R(\m)$ is Cohen--Macaulay, then by \cite[6.3]{AHT95} we have $\rt(\m) \leq \dim \F(\m)$. Hence by \Cref{rel type cor} and \Cref{fiber ring cor} we have $d\leq n-1$. For the converse, we again note that there is an isomorphism $\widetilde{\J}\cong \widetilde{\K}^d$. Moreover, from the definition of $\K$, it follows from \cite[A2.13, A2.14]{Eisenbud} that $\widetilde{\K}^i$ is a maximal Cohen--Macaulay $\R(\n)$-module, and hence a Cohen--Macaulay $\R(\n)$-ideal, for $1\leq i\leq n-1$. Thus if $d\leq n-1$, then $\R(\n) /\widetilde{\J} \cong \R(\m)$ is Cohen--Macaulay.
\end{proof}

\subsection{Regularity}
With the bigraded short exact sequences (\ref{direct sum SES}) -- (\ref{Jtilde SES}), we may also bound the \textit{Castelnuovo-Mumford regularity} of the Rees algebra $\R(\m)$, in a manner similar as was done for depth. We refer the reader to \cite{Trung98} for all definitions and conventions regarding regularity. Additionally, we note that regularity can be determined from a minimal graded free resolution, and is also easily compared along short exact sequences \cite[\textsection 20.5]{Eisenbud}.

We first note that the modules involved are bigraded, and so there are multiple choices for a single grading. We write $\reg_{x}$ to denote regularity with respect to the grading given by $\deg x_i =1$ and $\deg T_i =0$. Similarly, write $\reg_{T}$ to denote regularity with respect to the grading given by $\deg x_i =0$ and $\deg T_i =1$. Lastly, write $\reg_{x,T}$ to denote regularity with respect to the total grading given by $\deg x_i = \deg T_i =1$.

\begin{thm}\label{regularity thm}
With the assumptions of \Cref{main setting}, we have
\[
\begin{array}{ccc}
    \reg_{T}  \R(\m)  = d-1, &  \reg_{x}  \R(\m)  \leq  d-1, &  \reg_{x,T}  \R(\m)\leq d.
\end{array}
\]
Additionally, $\reg_{T} \F(\m) = d-1$. 
\end{thm}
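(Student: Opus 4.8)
The plan is to carry out the same short exact sequence bookkeeping as in the proof of \Cref{Cohen--Macaulayness of R(m)}, now with depth replaced by Castelnuovo--Mumford regularity and performed separately in each of the three gradings. The inputs are the regularities of the three building blocks $\R(\n)=B/I_2(\psi)$ (where $B=S[T_1,\ldots,T_n]$), $\n\R(\n)$, and $\F(\n)\cong k[T_1,\ldots,T_n]$. The first is resolved over $B$ by the bigraded Eagon--Northcott complex \cite[A2.10]{Eisenbud} (cf. the proof of \Cref{resolution prop}), whose $i$th free module, for $i\ge1$, is generated in total degree $i+1$ and in $x$-degree and $T$-degree each at most $i$. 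Hence $\reg_x\R(\n)=\reg_T\R(\n)=0$ and $\reg_{x,T}\R(\n)=1$, while $\reg_x\F(\n)=\reg_T\F(\n)=\reg_{x,T}\F(\n)=0$ since $\F(\n)$ is a polynomial ring.

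From these and (\ref{fiber ring SES}) one gets $\reg_x\n\R(\n),\reg_T\n\R(\n),\reg_{x,T}\n\R(\n)\le1$. Each twist $(-i,-(d-i))$ appearing in (\ref{direct sum SES}) raises $\reg_T$ by $d-i$, $\reg_x$ by $i$, and $\reg_{x,T}$ by $d$; combining this with the previous estimates along (\ref{direct sum SES}) yields $\reg_T\widetilde{\K}^d\le d$, $\reg_x\widetilde{\K}^d\le d$, and $\reg_{x,T}\widetilde{\K}^d\le d+1$. Now the bigraded isomorphism $\widetilde{\J}\cong\widetilde{\K}^d$ from the proof of \Cref{Cohen--Macaulayness of R(m)} lets us feed this into (\ref{Jtilde SES}), giving $\reg_T\R(\m)\le d-1$, $\reg_x\R(\m)\le d-1$, and $\reg_{x,T}\R(\m)\le d$, which are the three displayed inequalities.

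For the two equalities, \Cref{fiber ring cor} gives $\F(\m)\cong k[T_1,\ldots,T_n]/(f_d)$, and $f_d$ is a nonzero form of $T$-degree $d$ by \Cref{fi bideg remark}, so the resolution $0\to k[T_1,\ldots,T_n](-d)\to k[T_1,\ldots,T_n]\to\F(\m)\to0$ gives $\reg_T\F(\m)=d-1$ outright. To upgrade $\reg_T\R(\m)\le d-1$ to an equality, observe that $\m\R(\m)$ is generated by the $T$-degree-$0$ elements $\overline{x_1},\ldots,\overline{x_n}$, so $[\m\R(\m)]_j=\m^{j+1}=[\R(\m)_+]_{j+1}$; thus $\m\R(\m)\cong\R(\m)_+(1)$ as graded $\R(\m)$-modules, where $\R(\m)_+=(T_1,\ldots,T_n)\R(\m)$ is the irrelevant ideal. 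From $0\to\R(\m)_+\to\R(\m)\to R\to0$ and $\reg_T R=0$ we obtain $\reg_T\R(\m)_+\le\max\{\reg_T\R(\m),1\}$, hence $\reg_T\m\R(\m)\le\max\{\reg_T\R(\m)-1,0\}$; substituting into $0\to\m\R(\m)\to\R(\m)\to\F(\m)\to0$ forces $\reg_T\F(\m)\le\reg_T\R(\m)$, so $\reg_T\R(\m)\ge d-1$ and both equalities follow.

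The individual steps here are routine once the sequences are in place. The care is in reading off the correct bigraded shifts of the Eagon--Northcott resolution of $\R(\n)$ --- the total grading being the delicate case, which is precisely why one obtains only $\le d$ there rather than $\le d-1$ --- and in tracking how each twist $(-i,-(d-i))$ contributes in each single grading as one passes through (\ref{direct sum SES}). I expect the one genuinely nonformal input to be the graded-module identification $\m\R(\m)\cong\R(\m)_+(1)$ behind the lower bound for $\reg_T\R(\m)$; the rest is assembly of short exact sequences already established.
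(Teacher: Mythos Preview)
Your proof is correct. The upper-bound computations via the short exact sequences (\ref{direct sum SES})--(\ref{Jtilde SES}) and the Eagon--Northcott shifts are exactly what the paper does, and your computation of $\reg_T\F(\m)=d-1$ from \Cref{fiber ring cor} matches the paper as well.

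The only genuine difference is the lower bound $\reg_T\R(\m)\ge d-1$. The paper obtains this in one line by quoting the standard inequality $\rt(\m)-1\le\reg_T\R(\m)$ from \cite{Trung98} together with \Cref{rel type cor}, which gives $\rt(\m)=d$. You instead derive $\reg_T\F(\m)\le\reg_T\R(\m)$ directly, using the graded identification $\m\R(\m)\cong\R(\m)_+(1)$ and chasing the two sequences $0\to\R(\m)_+\to\R(\m)\to R\to0$ and $0\to\m\R(\m)\to\R(\m)\to\F(\m)\to0$. Your route is self-contained and avoids appealing to \cite{Trung98} (in fact, the inequality $\reg_T\F(\m)\le\reg_T\R(\m)$ you extract is exactly what underlies the relation-type bound in that reference), at the cost of a slightly longer argument. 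The paper's route is shorter but imports an external result. Either way the conclusion is the same.
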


\begin{proof}
We first note that the claim on the regularity of $\F(\m)$ follows as $\F(\m) \cong k[T_1,\ldots,T_n]/(f_d)$ and $\deg f_d =d$, by \Cref{fiber ring cor}. Moreover, it is well known that $\rt(\m) -1 \leq \reg_{T}  \R(\m)$ \cite{Trung98}, hence by \Cref{rel type cor} we have $d-1 \leq \reg_{T}  \R(\m)$. Thus the first equality will follow once it has been shown that $\reg_{T}  \R(\m) \leq d-1$. We show this and the remaining inequalities simultaneously, by applying \cite[20.19]{Eisenbud} repeatedly.

Recall that $\R(\n) \cong S[T_1,\ldots,T_n]/I_2(\psi)$, for $\psi$ as in (\ref{psi defn}), and so $\R(\n)$ is resolved by the Eagon--Northcott complex. Writing this as a graded free resolution, with respect to the various gradings, we deduce that
\begin{equation}\label{reg R(n)}
\begin{array}{ccc}
    \reg_{T}  \R(\n)  = 0, &  \reg_{x}  \R(\n) =0, &  \reg_{x,T}  \R(\n)=1.
\end{array}
\end{equation}
Moreover, recall that $\F(\n) \cong k[T_1,\ldots,T_n]$. Thus we have  
\begin{equation}\label{reg F(n)}
\begin{array}{ccc}
    \reg_{T}  \F(\n)  = 0, &  \reg_{x}  \F(\n) =0, &  \reg_{x,T}  \F(\n)=0,
\end{array}
\end{equation}
as well. Now with these bounds, and the sequence (\ref{fiber ring SES}), it follows that 
\begin{equation}\label{reg nR(n)}
\begin{array}{ccc}
    \reg_{T}  \n\R(\n)  \leq 1, &  \reg_{x}  \n\R(\n) \leq 1, &  \reg_{x,T}  \n\R(\n)\leq 1.
\end{array}
\end{equation}

Now with (\ref{reg R(n)}) and (\ref{reg nR(n)}), we are able to bound the regularity of the modules in the sequence (\ref{direct sum SES}). For ease of notation, write
\[
\begin{array}{ccc}
    M= \displaystyle{\bigoplus_{i=0}^{d-1} \n \R(\n)\big(-i,-(d-i)\big)},  & \quad&  N= \displaystyle{\bigoplus_{i=0}^{d} \R(\n)\big(-i,-(d-i)\big)}  
\end{array}
\]
for the modules appearing in (\ref{direct sum SES}). With the inequalities above, it follows that
\begin{equation}\label{reg M and N}
\begin{array}{lcl}
   \reg_{T} M \leq d+1,  &\quad\quad &\reg_{T} N =d, \\[1ex]
   \reg_{x} M \leq d,  &\quad\quad &\reg_{x} N = d, \\[1ex]
   \reg_{x,T} M \leq d+1,  &\quad\quad  &\reg_{x,T} N =d+1.
     \end{array}
\end{equation}
With the bounds above and the sequence (\ref{direct sum SES}), it then follows that 
\begin{equation}\label{reg Kd}
\begin{array}{ccc}
    \reg_{T}  \widetilde{\K}^d  \leq d, &  \reg_{x}  \widetilde{\K}^d \leq d, &  \reg_{x,T}  \widetilde{\K}^d\leq d+1.
\end{array}
\end{equation}

Lastly, with the bigraded isomorphism $\widetilde{\J} =\frac{\widetilde{f}\widetilde{\K}^{d} }{\widetilde{x_n}^d} \cong\widetilde{\K}^{d}$ and the inequalities in (\ref{reg R(n)}) and (\ref{reg Kd}), comparing regularity in the sequence (\ref{Jtilde SES}) shows that 
\[
\begin{array}{ccc}
    \reg_{T}  \R(\m)  \leq d-1, &  \reg_{x}  \R(\m)  \leq  d-1, &  \reg_{x,T}  \R(\m)\leq d,
\end{array}
\]
which completes the proof.
\end{proof}



\begin{thebibliography}{99}


\bibitem{AHT95} I.~Aberbach, C.~Huneke, and N.V.~Trung, \textit{Reduction numbers, Brian\c{c}on-Skoda theorems and the depth of Rees rings}, Compositio Math. \textbf{97} (1995), 403--434.

 
  
\bibitem{BM16} J.~A.~Boswell and V.~Mukundan, \textit{Rees algebras of almost linearly presented ideals}, J. Algebra \textbf{460} (2016), 102--127.

\bibitem{BH93} W. Bruns and J. Herzog, \textit{Cohen-{M}acaulay rings}, Cambridge University Press, Cambridge, 1993. 


\bibitem{BV88} W. Bruns and U. Vetter, \textit{Determinantal rings}, Lecture Notes in Mathematics \textbf{1327}, Springer-Verlag, Berlin, 1988.  




  

 
\bibitem{CPW23} A.~Costantini, E.~F.~Price, and M.~Weaver, \textit{On Rees algebras of linearly presented ideals and modules}, to appear in Collect. Math. \texttt{https://doi.org/10.1007/s13348-024-00440-0}, \texttt{arxiv:2308.16010}. 


\bibitem{CPW24} A.~Costantini, E.~F.~Price, and M.~Weaver, \textit{On Rees algebras of ideals and modules with weak residual conditions}, preprint: \texttt{arxiv:2409.14238}.

\bibitem{CHW08} D.~Cox, J.~W. Hoffman, and H.~Wang, \textit{Syzygies and the {R}ees algebra}, J. Pure Appl. Algebra \textbf{212} (2008), 1787--1796.
  


\bibitem{Eisenbud} D.~Eisenbud, \textit{Commutative algebra, with a view towards algebraic geometry}, Grad. Texts in Math. \textbf{150}, Springer-Verlag, New York, 1995. 


\bibitem{Macaulay2} D. R. Grayson and M. E. Stillman, Macaulay2, a software system for research in algebraic geometry. Available at http://www.macaulay2.com


\bibitem{HS14} S. H.~Hassanzadeh and A.~Simis \textit{Implicitization of de Jonqui\`eres parametrizations}, J. Commut. Algebra \textbf{6} (2014), 149--172.

  
\bibitem{HSV82}J.~Herzog, A.~Simis, and W.~V. Vasconcelos, \textit{Approximation complexes of blowing-up rings}, J. Algebra \textbf{74} (1982), 466--493.

\bibitem{HSV83}J.~Herzog, A.~Simis, and W.~V. Vasconcelos, \textit{Approximation complexes of blowing-up rings {II}}, J. Algebra \textbf{82} (1983), 53--83.





\bibitem{Huneke82} C. Huneke, \textit{Linkage and the Koszul homology of ideals}, Amer. J. Math. \textbf{104} (1982), 1043--1062.

\bibitem{Huneke83} C.~Huneke, \textit{Strongly Cohen--Macaulay schemes and residual intersections}, Trans. Amer. Math. Soc. \textbf{277} (1983), 739--763. 




\bibitem{KPU11} A. Kustin, C. Polini and B. Ulrich, \textit{Rational normal scrolls and the defining equations of Rees algebras}, J. Reine Angew. Math. \textbf{650} (2011), 23--65.

\bibitem{KPU17} A.~Kustin, C.~Polini and B.~Ulrich, \textit{The equations defining blowup algebras of height three Gorenstein ideals}, Algebra Number Theory \textbf{11} (2017), 1489--1525.
  

\bibitem{KPU20} A. Kustin, C. Polini and B. Ulrich, \textit{Degree bounds for local cohomology}, Proc. Lond. Math. Soc. \textbf{121} (2020), 1251--1267.


  





\bibitem{Micali64} A.~Micali \textit{Sur les alg\`ebres universelles}, Ann. Inst. Fourier (Grenoble) \textbf{14} (1964), 33--87.


\bibitem{Morey96} S. Morey, \textit{Equations of blowups of ideals of codimension two and three}, J. Pure Appl. Algebra \textbf{109} (1996), 197--211.  
  
\bibitem{MU96} S.~Morey and B.~Ulrich, \textit{Rees algebras of ideals of low codimension}, Proc. Amer. Math. Soc. \textbf{124} (1996), 3653--3661.
  
\bibitem{Nguyen14} P.~H.~L. Nguyen, \textit{On Rees algebras of linearly presented ideals}, J. Algebra \textbf{420} (2014), 186–-200.

\bibitem{Nguyen17} P.~H.~L. Nguyen, \textit{On Rees algebras of linearly presented ideals in three variables}, J. Pure Appl. Algebra \textbf{221} (2017), 2180–-2198.

\bibitem{RS22} Z.~Ramos and A.~Simis \textit{De Jonqui\`eres transformations in arbitrary dimension. An ideal theoretic view}, Comm. Algebra \textbf{50} (2022), 4095--4108.

\bibitem{Shamash69} J.~Shamash, \textit{The Poincar\'e series of a local ring}, J. Algebra \textbf{12} (1969), 453--470.

\bibitem{SV81}A.~Simis and W.~V. Vasconcelos, \textit{The syzygies of the conormal module}, Amer. J. Math. \textbf{103} (1981), 203--224.
  


  


\bibitem{Tate57} J.~Tate, \textit{Homology of Noetherian rings and local rings}, Illinois J. Math. \textbf{1} (1957), 14--27.

\bibitem{Trung98} N.~V. Trung, \textit{The Castelnuovo regularity of the Rees algebra and the associated graded ring}, Trans. Amer. Math. Soc. \textbf{350} (1998), 2813--2832.

  







\bibitem{VasconcelosBook94} W.~V. Vasconcelos, \textit{Arithmetic of blowup algebras}, London Math. Soc. Lecture Note Ser. \textbf{195}, Cambridge University Press, Cambridge, 1994.  

\bibitem{VasconcelosBook05} W.~V. Vasconcelos, \textit{Integral closure}, Springer Monographs in Mathematics, Springer-Verlag, Berlin, 2005.

\bibitem{Weaver23} M. Weaver, \textit{On {R}ees algebras of ideals and modules over hypersurface rings},  J. Algebra \textbf{636} (2023), 417--454.

\bibitem{Weaver24} M. Weaver, \textit{The equations of {R}ees algebras of height three {G}orenstein ideals in hypersurface rings}, J. Commut. Algebra \textbf{16} (2024), 123--149.

















 
\end{thebibliography}
\end{document}